\newtheorem{theorem}{Theorem}[section]
\newtheorem{definition}[theorem]{Definition}
\newtheorem{proposition}[theorem]{Proposition}
\newtheorem{lemma}[theorem]{Lemma}
\begin{document}

\title[Almost Hadamard matrices]
{Almost Hadamard matrices: general theory and examples}

\author{Teodor Banica}
\address{T.B.: Department of Mathematics, Cergy-Pontoise University, 95000 Cergy-Pontoise, France. {\tt teodor.banica@u-cergy.fr}}

\author{Ion Nechita}
\address{I.N.: CNRS, Laboratoire de Physique Th\'eorique, IRSAMC, Universit\'e de Toulouse, UPS, 31062 Toulouse, France. {\tt nechita@irsamc.ups-tlse.fr}}

\author{Karol \.Zyczkowski}
\address{K.Z.: Institute of Physics, Jagiellonian University, Cracow and Center for Theoretical Physics, Polish Academy of Sciences, Warsaw, Poland. {\tt karol@tatry.if.uj.edu.pl}}

\subjclass[2000]{05B20 (15B10)}
\keywords{Hadamard matrix, Orthogonal group}

\begin{abstract}
We develop a general theory of ``almost Hadamard matrices''. These are by definition the matrices $H\in M_N(\mathbb R)$ having the property that $U=H/\sqrt{N}$ is orthogonal, and is a local maximum of the $1$-norm on $O(N)$. Our study includes a detailed discussion of the circulant case ($H_{ij}=\gamma_{j-i}$) and of the two-entry case ($H_{ij}\in\{x,y\}$), with the construction of several families of examples, and some $1$-norm computations.
\end{abstract}

\maketitle

\section*{Introduction}

An Hadamard matrix is a square matrix having $\pm 1$ entries, whose rows are pairwise orthogonal. The simplest example, appearing at $N=2$, is the Walsh matrix:
$$H_2=\begin{pmatrix}1&1\\ 1&-1\end{pmatrix}$$

At $N=3$ we cannot have examples, due to the orthogonality condition, which forces $N$ to be even. At $N=4$ now, we have several examples, for instance $H_4=H_2\otimes H_2$: 
$$H_4=\begin{pmatrix}1&1&1&1\\ 1&-1&1&-1\\ 1&1&-1&-1\\ 1&-1&-1&1\end{pmatrix}$$

For higher values of $N$, the construction of Hadamard matrices is quite a tricky problem. First, by permuting rows and columns or by multiplying them by $-1$, we can always assume that the first 3 rows of our matrix look as follows:
$$H=\begin{pmatrix}
1\ldots 1&1\ldots 1&1\ldots 1&1\ldots 1\\
1\ldots 1&1\ldots 1&-1\ldots -1&-1\ldots -1\\
1\ldots 1&-1\ldots -1&1\ldots 1&-1\ldots -1\\
\ldots&\ldots&\ldots&\ldots
\end{pmatrix}$$

Now if we denote by $x,y,z,t$ the sizes of the 4 columns, the orthogonality conditions between the first 3 rows give $x=y=z=t$, so $N=x+y+z+t$ is a multiple of 4.

A similar analysis with 4 rows instead of 3, or any other kind of abstract or concrete consideration doesn't give any further restriction on $N$, and we have:

\medskip

\noindent {\bf Hadamard Conjecture (HC).} {\em There is at least one Hadamard matrix of size $N\times N$, for any $N\in 4\mathbb N$.}

\medskip

This conjecture, going back to the 19th century, is probably one of the most beautiful statements in combinatorics, and in mathematics in general. The numeric verification so far goes up to $N=664$, see \cite{kta}. For a general presentation of the problem, see \cite{lgo}.

At the level of concrete examples, the only ones which are simple to describe are the tensor powers of the Walsh matrix, having size $2^k$. For some other examples, see \cite{hor}.

Yet another good problem, simple-looking as well, concerns the circulant case. Given a vector $\gamma\in(\pm 1)^N$, one can ask whether the matrix $H\in M_N(\pm 1)$ defined by $H_{ij}=\gamma_{j-i}$ is Hadamard or not. Here is a solution to the problem, appearing at $N=4$:
$$K_4=\begin{pmatrix}-1&1&1&1\\ 1&-1&1&1\\ 1&1&-1&1\\ 1&1&1&-1\end{pmatrix}$$

More generally, any vector $\gamma\in(\pm 1)^4$ satisfying $\sum\gamma_i=\pm 1$ is a solution to the problem. The following conjecture, going back to \cite{rys}, states that there are no other solutions:

\medskip

\noindent {\bf Circulant Hadamard Conjecture (CHC).} {\em There is no circulant Hadamard matrix of size $N\times N$, for any $N\neq 4$.}

\medskip

The fact that such a simple-looking problem is still open might seem quite surprizing. If we denote by $S\subset\{1,\ldots,N\}$ the set of positions of the $-1$ entries of $\gamma$, the Hadamard matrix condition is simply $|S\cap(S+k)|=|S|-N/4$, for any $k\neq 0$, taken modulo $N$. Thus, the above conjecture simply states that at $N\neq 4$, such a set $S$ cannot exist!

Summarizing, the Hadamard matrices are very easy to introduce, and they quickly lead to some difficult and interesting combinatorial problems. Regarding now the other motivations for studying such matrices, these are quite varied:
\begin{enumerate}
\item The Hadamard matrices were first studied by Sylvester \cite{syl}, who was seemingly attracted by their plastic beauty: just replace the $\pm 1$ entries by black and white tiles, and admire the symmetries and dissymmetries of the resulting design!

\item More concretely now, the Hadamard matrices can be used for various coding purposes, and have several applications to engineering, and quantum physics. For instance the Walsh matrices $H_2^{\otimes k}$ are used in the Olivia MFSK radio protocol.
\end{enumerate}

Most applications of the Hadamard matrices, however, come from their generalizations. A ``complex Hadamard'' matrix is a matrix $H\in M_N(\mathbb C)$ all whose entries are on the unit circle, and whose rows are pairwise orthogonal. The basic example is $\widetilde{F}_N=\sqrt{N}F_N$, where $F_N$ is the matrix of the Fourier transform over $\mathbb Z_N$. That is, with $\omega=e^{2\pi i/N}$:
$$\widetilde{F}_N=\begin{pmatrix}
1&1&1&\ldots&1\\
1&\omega&\omega^2&\ldots&\omega^{N-1}\\
1&\omega^2&\omega^4&\ldots&\omega^{2(N-1)}\\
\ldots&\ldots&\ldots&\ldots&\ldots\\
1&\omega^{N-1}&\omega^{2(N-1)}&\ldots&\omega^{(N-1)^2}
\end{pmatrix}$$

As a first observation, the existence of this matrix prevents the existence of a ``complex version'' of the HC. However, when trying to construct complex Hadamard matrices by using roots of unity of a given order, a wide, subtle, and quite poorly understood generalization of the HC problematics appears. See e.g. \cite{bbs}, \cite{but}, \cite{lle}, \cite{lau}.

As for the motivations and applications, these partly come from pure mathematics, cf. e.g. \cite{ba2}, \cite{haa}, \cite{jon}, \cite{pop}, and partly come from quantum physics, cf. e.g. \cite{bb+}, \cite{tzy}.

Let us go back now to the real case. Since the determinant of $N$ vectors is maximized when these vectors are chosen pairwise orthogonal, we have the following result:

\medskip

\noindent {\bf Theorem A.} {\em For a matrix $H\in M_N(\pm 1)$ we have $|\det H|\leq N^{N/2}$, with equality if and only if $H$ is Hadamard.}

\medskip

This result, due to Hadamard himself \cite{had}, has led to a number of interesting problems, and to the general development of the theory of Hadamard matrices. See \cite{hor}.

As already mentioned, in order for an Hadamard matrix to exist, its size $N$ must be a multiple of $4$. For numbers of type $N=4n+k$ with $k=1,2,3$, several ``real'' generalizations of the Hadamard matrices have been constructed. The idea is usually to consider matrices $H\in M_N(\pm 1)$, whose rows are as orthogonal as they can be:

\medskip

\noindent {\bf Definition A.} {\em A ``quasi-Hadamard'' matrix is a square matrix $H\in M_N(\pm 1)$ which is as orthogonal as possible, e.g. which maximizes the quantity $|det H|$.}

\medskip

This definition is of course a bit vague, but the main idea is there. For a detailed discussion of the different notions here, we refer to the articles \cite{aa+}, \cite{kko}, \cite{pso}. 

Yet another interpretation of the Hadamard matrices comes from the Cauchy-Schwarz inequality. Since for $U\in O(N)$ we have $||U||_2=\sqrt{N}$, we obtain:

\medskip

\noindent {\bf Theorem B.} {\em For a matrix $U\in O(N)$ we have $||U||_1\leq N\sqrt{N}$, with equality if and only if $H=\sqrt{N}U$ is Hadamard.}

\medskip

This result, first pointed out in \cite{bc1}, shows that the matrices of type $H=\sqrt{N}U$, with $U\in O(N)$ being a maximizer of the 1-norm on $O(N)$, can be thought of as being some kind of ``analytic generalizations'' of the Hadamard matrices. Note that such matrices exist for any $N$, in particular for $N=4n+k$ with $k=1,2,3$.

As an example, the maximum of the 1-norm on $O(3)$ can be shown to be the number $5$, coming from the matrix $U_3=K_3/\sqrt{3}$ and its various conjugates, where:
$$K_3=\frac{1}{\sqrt{3}}\begin{pmatrix}-1&2&2\\ 2&-1&2\\ 2&2&-1\end{pmatrix}$$

This result, proved in \cite{bc1} by using the Euler-Rodrigues formula, is actually something quite accidental.  In general, the integration on $O(N)$ is quite a subtle business, and the maximum of the 1-norm is quite difficult to approach. See \cite{bc2}, \cite{bsc}, \cite{csn}. 

These integration problems make the above type of matrices quite hard to investigate. Instead of looking directly at them, we will rather enlarge the attention to the matrices of type $H=\sqrt{N}U$, where $U\in O(N)$ is a local maximizer of the 1-norm on $O(N)$. Indeed, according to the Hessian computations in \cite{bc1}, these latter matrices are characterized by the fact that all their entries are nonzero, and $SU^t>0$, where $S_{ij}={\rm sgn}(U_{ij})$.

Summarizing, Theorem B suggests the following definition:

\medskip

\noindent {\bf Definition B.} {\em An ``almost Hadamard'' matrix is a square matrix $H\in M_N(\mathbb R)$ such that $U=H/\sqrt{N}$ is orthogonal, and is a local maximum of the $1$-norm on $O(N)$.}

\medskip

The basic examples are of course the Hadamard matrices. We have as well the following $N\times N$ matrix, with $N\in\mathbb N$ arbitrary, which generalizes the above matrices $K_3,K_4$:
$$K_N=\frac{1}{\sqrt{N}}\begin{pmatrix}
2-N&2&\ldots&2&2\\
2&2-N&\ldots&\ldots&2\\
\ldots&\ldots&\ldots&\ldots&\ldots\\
2&\ldots&\ldots&\ldots&\ldots\\
2&2&\ldots&\ldots&2-N
\end{pmatrix}$$

A lot of other interesting examples exist, as we will show in this paper. Here is for instance a remarkable matrix, having order $N\in 2\mathbb N+1$, and circulant structure:
\setlength{\extrarowheight}{8pt}
$$L_N=\frac{1}{\sqrt{N}}
\begin{pmatrix}
1&-\cos^{-1}\frac{\pi}{N}&\cos^{-1}\frac{2\pi}{N}&\ldots&\cos^{-1}\frac{(N-1)\pi}{N}\\
\cos^{-1}\frac{(N-1)\pi}{N}&1&-\cos^{-1}\frac{\pi}{N}&\ldots&-\cos^{-1}\frac{(N-2)\pi}{N}\\
\ldots&\ldots&\ldots&\ldots&\ldots\\
-\cos^{-1}\frac{\pi}{N}&\cos^{-1}\frac{2\pi}{N}&-\cos^{-1}\frac{3\pi}{N}&\ldots&1
\end{pmatrix}$$
\setlength{\extrarowheight}{0pt}

Yet another series, with $N=q^2+q+1$, where $q=p^k$ is a prime power, comes from the adjacency matrix of the projective plane over $\mathbb F_q$. Here is for instance the matrix associated to the Fano plane ($q=2$), where $x=2-4\sqrt{2}$, $y=2+3\sqrt{2}$:
$$I_7=\frac{1}{2\sqrt{7}}\begin{pmatrix}
x&x&y&y&y&x&y\\
y&x&x&y&y&y&x\\
x&y&x&x&y&y&y\\
y&x&y&x&x&y&y\\
y&y&x&y&x&x&y\\
y&y&y&x&y&x&x\\
x&y&y&y&x&y&x
\end{pmatrix}$$

The aim of the present paper is to provide a systematic study of such matrices, with the construction of a number of non-trivial examples, and with the development of some general theory as well. Our motivation comes from two kinds of problems:
\begin{enumerate}
\item The Hadamard matrix problematics. The world of Hadamard matrices is extremely rigid, and we think that our study of almost Hadamard matrices, where there is much more freedom, can help. As an example, there are several non-trivial classes of circulant almost Hadamard matrices at any $N\in\mathbb N$, and trying to understand them might end up in sheding some new light on the CHC.

\item Generalizations of Hadamard matrices. The Hadamard matrices have applications in a number of areas of physics and engineering, notably in coding theory, and in various branches of quantum physics. One problem, however, is that these matrices exist only at $N=4n$. At $N=4n+k$ with $k=1,2,3$ some generalizations would be needed, and we believe that our almost Hadamard matrices can help.
\end{enumerate}

The paper is organized as follows: 1 is a preliminary section, in 2-3 we investigate two special cases, namely the circulant case and the two-entry case, and 4 contains a list of examples. The final section, 5, contains a few concluding remarks.

\bigskip

\noindent {\bf Acknowledgements.} We would like to thank Guillaume Aubrun for several useful discussions. The work of T.B. was supported by the ANR grant ``Granma''. I.N. acknowledges financial support from the ANR project OSvsQPI 2011 BS01 008 01 and from a CNRS PEPS grant. The work of K.Z. was supported by the grant N N202 261938, financed by the Polish Ministry of Science and Higher Education.

\section{Preliminaries}

We consider in this paper various square matrices $M\in M_N(\mathbb C)$. The indices of our matrices will usually range in the set $\{0,1,\ldots,N-1\}$.

\begin{definition}
We use the following special $N\times N$ matrices: 
\begin{enumerate}
\item $1_N$ is the identity matrix.

\item $J_N$ is the ``flat'' matrix, having all entries equal to $1/N$.

\item $F_N$ is the Fourier matrix, given by $(F_N)_{ij}=\omega^{ij}/\sqrt{N}$, with $\omega=e^{2\pi i/N}$.
\end{enumerate}
\end{definition}

We denote by $D$ the generic diagonal matrices, by $U$ the generic orthogonal or unitary matrices, and by $H$ the generic Hadamard matrices, and their generalizations.

Our starting point is the following observation, from \cite{bc1}:

\begin{proposition}
For $U\in O(N)$ we have $||U||_1\leq N\sqrt{N}$, with equality if and only if $H=\sqrt{N}U$ is Hadamard.
\end{proposition}

\begin{proof}
The first assertion follows from the Cauchy-Schwarz inequality:
$$\sum_{ij}|U_{ij}|\leq\left(\sum_{ij}1^2\right)^{1/2}\left(\sum_{ij}|U_{ij}|^2\right)^{1/2}=N\sqrt{N}$$

For having equality the numbers $|U_{ij}|$ must be equal, and since the sum of squares of these numbers is $N$, we must have $|U_{ij}|=1/\sqrt{N}$, which gives the result.
\end{proof}

As already mentioned in the introduction, the study of maximizers of the 1-norm on $O(N)$ is a quite difficult task. However, we have here the following result, from \cite{bc1}:

\begin{theorem}
For $U\in O(N)$, the following are equivalent:
\begin{enumerate}
\item $U$ is a local maximizer of the $1$-norm on $O(N)$.

\item $U_{ij}\neq 0$, and $SU^t>0$, where $S_{ij}={\rm sgn}(U_{ij})$.
\end{enumerate}
\end{theorem}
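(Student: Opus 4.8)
The plan is to work in the exponential chart of $O(N)$ at $U$: every nearby orthogonal matrix is uniquely $e^AU$ for a small antisymmetric matrix $A$ (with $A^t=-A$), so the problem becomes the unconstrained analysis of $F(A)=||e^AU||_1$ near $A=0$. First I would dispose of the nonvanishing condition. If some entry $U_{ij}=0$, then along the curve $e^{tB}U$ its $(i,j)$ entry equals $t(BU)_{ij}+O(t^2)$, so its contribution to the $1$-norm is $|t|\,|(BU)_{ij}|+O(t^2)$, a genuine corner. Splitting $F$ into its smooth part (with linear derivative $L$) and the corner terms, the one-sided derivatives are $F'(0^\pm)=L\pm K$ with $K=\sum_{U_{ij}=0}|(BU)_{ij}|\ge0$, and a local maximum requires $F'(0^+)\le0\le F'(0^-)$, which forces $K=0$, hence $(BU)_{ij}=0$ for every antisymmetric $B$ and every zero entry. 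Choosing $B=E_{ik}-E_{ki}$ gives $(BU)_{ij}=U_{kj}$, so the whole $j$-th column of $U$ would vanish, contradicting $U\in O(N)$. Thus a local maximizer has all entries nonzero, the signs $S_{ij}={\rm sgn}(U_{ij})$ are then locally constant, and $F(A)=\langle S,e^AU\rangle$ is smooth near $0$, reducing everything to a second-order analysis.

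Next I would extract the first- and second-order conditions by Taylor expanding $F(A)=\mathrm{Tr}(S^te^AU)$. The linear term is $\mathrm{Tr}(S^tAU)=\mathrm{Tr}(US^tA)$, which vanishes for all antisymmetric $A$ exactly when $US^t$ is symmetric, i.e. when $US^t=SU^t$; this is the critical-point condition. The quadratic term is $\tfrac12\mathrm{Tr}(S^tA^2U)=\tfrac12\mathrm{Tr}\big((SU^t)A^2\big)$. Writing $P=SU^t$, symmetric by the previous step, and diagonalizing it with eigenvalues $\lambda_1,\dots,\lambda_N$, a short computation gives $\mathrm{Tr}(PA^2)=-\sum_{i<j}(\lambda_i+\lambda_j)A_{ij}^2$ in the eigenbasis. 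Hence the Hessian is negative semidefinite precisely when $\lambda_i+\lambda_j\ge0$ for all $i\neq j$, and negative definite precisely when these sums are strictly positive.

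For the implication (2)$\Rightarrow$(1), positivity of $P=SU^t$ makes $\mathrm{Tr}(PA^2)=-||P^{1/2}A^t||_2^2<0$ for every $A\neq0$, so the Hessian is negative definite and $U$ is a strict local maximizer, the nonvanishing of the entries being exactly what guarantees smoothness and the absence of corner terms. The reverse implication (1)$\Rightarrow$(2) is where the real work lies: maximality yields the nonvanishing of entries, the symmetry of $P$, and the inequalities $\lambda_i+\lambda_j\ge0$, and the delicate point is to upgrade this to the stated positivity of $SU^t$. The hard part will be precisely this upgrade, since a raw second-order analysis only controls the sums $\lambda_i+\lambda_j$, which is strictly weaker than positive-definiteness; I expect to close the gap by exploiting the rigid structure of $P$ — namely $PU=S$, the facts that $\mathrm{Tr}\,P=||U||_1>0$ and $P_{ii}=\sum_k|U_{ik}|>0$, and a higher-order expansion along any degenerate direction where some $\lambda_i+\lambda_j$ vanishes — in order to exclude a nonpositive eigenvalue at a genuine local maximum.
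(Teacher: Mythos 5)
Your setup is sound and in places cleaner than the paper's sketch: the exponential chart $A\mapsto e^AU$ with $A$ antisymmetric replaces the paper's explicit two-column rotations, your corner-term argument for the nonvanishing of the entries is correct (the choice $B=E_{ik}-E_{ki}$ killing a whole column is exactly the right trick), and the identification of the critical-point condition with the symmetry of $SU^t$, together with the computation $\mathrm{Tr}(PA^2)=-\sum_{i<j}(\lambda_i+\lambda_j)A_{ij}^2$, correctly establishes the implication (2)$\Rightarrow$(1). This is a legitimate alternative to the paper's Lagrange-multiplier presentation (which, incidentally, is itself only a sketch deferring to \cite{bc1}, and whose displayed Hessian formula is loose about signs and about restricting to the tangent space).

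The genuine gap is in (1)$\Rightarrow$(2), and you have located it yourself without closing it. Second-order necessity at a local maximum gives only $\lambda_i+\lambda_j\geq 0$ for $i\neq j$, i.e.\ at most one nonpositive eigenvalue of $P=SU^t$, dominated in absolute value by all the others. None of the auxiliary facts you list rules this out: $\mathrm{Tr}\,P=||U||_1>0$ and $P_{ii}=\sum_k|U_{ik}|>0$ are perfectly consistent with a spectrum such as $(-1,3,3,\dots)$, and $PU=S$ only pins down $\mathrm{Tr}(PP^t)=N^2$, which again excludes nothing. The "higher-order expansion along a degenerate direction" is not a routine step either: when $\lambda_1+\lambda_2=0$ the function is still smooth but one must compute the cubic and quartic terms of $\mathrm{Tr}(S^te^AU)$ on the kernel of the Hessian, and when $\lambda_1<0<\lambda_1+\lambda_2$ the Hessian is already negative definite on antisymmetric directions even though $P$ is not positive, so no expansion in $A$ alone can detect the negative eigenvalue. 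This is precisely the hard content of the theorem, carried out in \cite{bc1} by a finer analysis, and as written your proposal proves only the implication (2)$\Rightarrow$(1) together with the two necessary conditions "$U_{ij}\neq 0$ and $SU^t\geq 0$ in the weak sense $\lambda_i+\lambda_j\geq 0$''; the equivalence itself remains open in your argument.
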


\begin{proof}
As already mentioned, this result is from \cite{bc1}. Here is the idea of the proof:

Let us first prove that if $U$ is a local maximizer of the 1-norm, then $U_{ij}\neq 0$. For this purpose, let $U_1,\ldots,U_N$ be the columns of $U$, and let us perform a rotation of $U_1,U_2$:
$$\begin{pmatrix}U^t_1\\ U^t_2\end{pmatrix}=\begin{pmatrix}
\cos t\cdot U_1-\sin t\cdot U_2\\ \sin t\cdot U_1+\cos t\cdot U_2
\end{pmatrix}$$

In order to compute the 1-norm, let us permute the columns of $U$, in such a way that the first two rows look as follows, with $X_k\neq 0$, $Y_k\neq 0$, $A_kC_k>0$, $B_kD_k<0$:
$$\begin{pmatrix}U_1\\ U_2\end{pmatrix}
=\begin{pmatrix}
0&0&Y&A&B\\
0&X&0&C&D
\end{pmatrix}$$

If we agree that the lower-case letters denote the 1-norms of the corresponding upper-case vectors, and we let $K=u_3+\ldots+u_N$, then for $t>0$ small we have:
\begin{eqnarray*}
||U^t||_1
&=&||\cos t\cdot U_1-\sin t\cdot U_2||_1+||\sin t\cdot U_1+\cos t\cdot U_2||_1+K\\
&=&(\cos t+\sin t)(x+y+a+d)+(\cos t-\sin t)(b+c)+K
\end{eqnarray*}

Now since $U$ locally maximizes the 1-norm on $O(N)$, the derivative of this quantity must be negative in the limit $t\to 0$. So, we obtain the following inequality:
$$(x+y+a+d)-(b+c)\leq 0$$

Consider now the matrix obtained by interchanging $U_1,U_2$. Since this matrix must be as well a local maximizer of the 1-norm on $O(N)$, we obtain:
$$(x+y+b+c)-(a+d)\leq 0$$

We deduce that $x+y=0$, so $x=y=0$, and the $0$ entries of $U_1,U_2$ must appear at the same positions. By permuting the rows of $U$ the same must hold for any two rows $U_i,U_j$. Now since $U$ cannot have zero columns, all its entries must be nonzero, as claimed.

It remains to prove that if $U_{ij}\neq 0$, then $U$ is a local maximizer of $F(U)=||U||_1$ if and only if $SU^t>0$, where $S_{ij}={\rm sgn}(U_{ij})$. For this purpose, we differentiate $F$:
$$dF=\sum_{ij}S_{ij}dU_{ij}$$

We know that $O(N)$ consists of the zeroes of the polynomials $A_{ij}=\sum_kU_{ik}U_{jk}-\delta_{ij}$. So, $U$ is a critical point of $F$ if and only if $dF\in span(dA_{ij})$. Now since $A_{ij}=A_{ji}$, this is the same as asking for a symmetric matrix $M$ such that $dF=\sum_{ij}M_{ij}dA_{ij}$. But:
$$\sum_{ij}M_{ij}dA_{ij}
=\sum_{ijk}M_{ij}(U_{ik}dU_{jk}+U_{jk}dU_{ik})
=2\sum_{lk}(MU)_{lk}dU_{lk}$$

Thus the critical point condition reads $S=2MU$, so the matrix $M=SU^t/2$ must be symmetric. Now the Hessian of $F$ applied to a vector $X=UY$, with $Y\in O(N)$, is:
$$Hess(F)(X)
=\frac{1}{2}Tr(X^t\cdot SU^t\cdot X)
=\frac{1}{2}Tr(Y^t\cdot U^tS\cdot Y)$$

Thus the Hessian of $F$ is positive definite when $U^tS$ is positive definite, which is the same as saying that $U(U^tS)U^t=SU^t$ is positive definite, and we are done. 
\end{proof}

The above result gives rise to the following definition:

\begin{definition}
A square matrix $H\in M_N(\mathbb R^*)$ is called ``almost Hadamard'' if $U=H/\sqrt{N}$ is orthogonal, and the following equivalent conditions are satisfied:
\begin{enumerate}
\item $U$ is a local maximizer of the $1$-norm on $O(N)$.

\item $U_{ij}\neq 0$, and with $S_{ij}={\rm sgn}(U_{ij})$, we have $SU^t>0$.
\end{enumerate}
If so is the case, we call $H$ ``optimal'' if $U$ is a maximizer of the $1$-norm on $O(N)$.
\end{definition}

Let $J_N$ be the flat $N\times N$ matrix, having all the entries equal to $1/N$. Also, let us call ``Hadamard equivalence'', or just ``equivalence'', the equivalence relation on the $N\times N$ matrices coming from permuting the rows and columns, or multiplying them by $-1$.

\begin{proposition}
The class of almost Hadamard matrices has the following properties:
\begin{enumerate}
\item It contains all the Hadamard matrices.

\item It contains the matrix $K_N=\sqrt{N}(2J_N-1_N)$.

\item It is stable under equivalence, tensor products, and transposition.
\end{enumerate}
\end{proposition}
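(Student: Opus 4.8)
The plan is to verify everything through the characterization in Definition 1.4: a matrix $H\in M_N(\mathbb R^*)$ is almost Hadamard precisely when $U=H/\sqrt N$ is orthogonal, all $U_{ij}\neq0$, and the symmetric matrix $SU^t$ is positive definite, where $S_{ij}=\mathrm{sgn}(U_{ij})$. The guiding observation, which I would isolate first, is that each operation in (3) acts on the pair $(U,S)$ in a controlled way: it sends $SU^t$ either to an orthogonal conjugate of itself or to a tensor product, both of which preserve positive definiteness, while manifestly preserving the nonvanishing of the entries. So the whole proposition reduces to one genuine computation per item, plus bookkeeping.

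For (1), if $H$ is Hadamard then $U=H/\sqrt N$ is orthogonal with entries $\pm1/\sqrt N\neq0$, and $S=\sqrt N\,U$, so $SU^t=\sqrt N\,UU^t=\sqrt N\,1_N>0$. For (2), I would first note that $U=2J_N-1_N$ is a symmetric involution, since $J_N^2=J_N$ gives $U^2=4J_N-4J_N+1_N=1_N$; hence $U$ is orthogonal. For $N\geq3$ its entries $(2-N)/N$ and $2/N$ are nonzero, with sign matrix $S=NJ_N-2\cdot1_N$, and a direct expansion using $J_N^2=J_N$ yields $SU^t=(N-4)J_N+2\cdot1_N$. Since $J_N$ has eigenvalues $1$ (once) and $0$ (with multiplicity $N-1$), this matrix has eigenvalues $N-2$ and $2$, both positive for $N\geq3$, so $K_N$ is almost Hadamard.

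For (3), I would treat the three stabilities by tracking $(U,S)$. Equivalence is generated by the moves $U\mapsto PUQ$ (permutations), $U\mapsto EU$ and $U\mapsto UE$ (sign diagonals $E$): the sign matrix follows the same recipe, and a short computation shows that $S'U'^t$ equals $P\,SU^t\,P^t$, $E(SU^t)E$, or $SU^t$ respectively, each an orthogonal conjugate of $SU^t$, hence still positive definite. For tensor products one checks that the sign matrix of $U\otimes U'$ is $S\otimes S'$ and that $(S\otimes S')(U\otimes U')^t=(SU^t)\otimes(S'U'^t)$, a tensor product of positive definite matrices, hence positive definite. Transposition is the subtle case: the sign matrix of $U^t$ is $S^t$, so the required condition is $S^tU>0$. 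Here I would invoke that at a critical point $SU^t$ is symmetric (established in the proof of Theorem 1.3 via $M=SU^t/2$), from which $SU^t=US^t$ and therefore $U^tS=S^tU$; since $U^tS=U^t(SU^t)U$ is an orthogonal conjugate of $SU^t$, it is positive definite, giving $S^tU>0$.

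The only step that is not purely mechanical is transposition, and that is where I expect the main obstacle to sit: one cannot conclude $S^tU>0$ from $SU^t>0$ by conjugation alone, because a priori $S^tU$ and $SU^t$ need not be similar. The identity $U^tS=S^tU$ that rescues the argument relies essentially on the symmetry of $SU^t$ at a maximizer, so the key is to import that symmetry from Theorem 1.3 rather than to attempt a direct manipulation.
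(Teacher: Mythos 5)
Your proof is correct and follows essentially the same route as the paper: verifying the criterion $U$ orthogonal, $U_{ij}\neq 0$, $SU^t>0$ for each item, with the same computations for the Hadamard case, for $K_N$ (your $(N-4)J_N+2\cdot 1_N$ is the paper's $(N-2)J_N+2(1_N-J_N)$), and for tensor products. The one place you go beyond the paper, which simply declares equivalence and transposition ``clear from definitions'', is the transposition argument via the symmetry of $SU^t$ at a critical point, giving $S^tU=U^tS=U^t(SU^t)U>0$; this is a legitimate and worthwhile filling-in of a detail the paper skips (the alternative being to note that $U\mapsto U^t$ preserves the $1$-norm on $O(N)$ and hence its local maximizers).
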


\begin{proof}
All the assertions are clear from definitions:

(1) This follows either from Proposition 1.2, or from the fact that $U=H/\sqrt{N}$ is orthogonal, and $SU^t=HU^t=\sqrt{N}1_N$ is positive.

(2) First, the matrix $U=K_N/\sqrt{N}$ is orthogonal, because it is symmetric, and:
$$U^2=(2J_N-1_N)^2=4J_N^2-4J_N+1_N=1_N$$

Also, we have $S=NJ_N-21_N$, so the matrix $SU^t$ is indeed positive:
$$SU^t=(NJ_N-21_N)(2J_N-1_N)=(N-2)J_N+2(1_N-J_N)$$

(3) For a tensor product of almost Hadamard matrices $H=H'\otimes H''$ we have $U=U'\otimes U''$ and $S=S'\otimes S''$, so that $U$ is unitary and $SU^t$ is positive, as claimed. As for the assertions regarding equivalence and transposition, these are clear from definitions.
\end{proof}

Regarding now the optimal case, we have the following result, from \cite{bc1}:

\begin{proposition}
The optimal almost Hadamard matrices are as follows:
\begin{enumerate}
\item At any $N$ where HC holds, these are the $N\times N$ Hadamard matrices.

\item At $N=3$, these are precisely $K_3=\sqrt{3}(2J_3-1_3)$ and its conjugates.
\end{enumerate}
\end{proposition}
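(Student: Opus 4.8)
For part (1) the plan is to read off everything from Proposition 1.2. If $N$ is such that an $N\times N$ Hadamard matrix $H_0$ exists, then $U_0=H_0/\sqrt N\in O(N)$ satisfies $\|U_0\|_1=N\sqrt N$, so by Proposition 1.2 the maximum of the $1$-norm on $O(N)$ is exactly $N\sqrt N$. Hence $H=\sqrt N\,U$ is optimal almost Hadamard iff $\|U\|_1=N\sqrt N$, which by the equality case of Proposition 1.2 holds iff $H$ is Hadamard; note that a global maximizer is automatically a local one, hence almost Hadamard by Theorem 1.3, so no further condition intervenes.

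Part (2) is the substantial one, since there is no $3\times3$ Hadamard matrix and so the maximum is strictly below $3\sqrt3$. My plan is to bypass any integration over $O(3)$ and instead bound the $1$-norm by a nuclear norm. For $U\in O(3)$, writing $S={\rm sgn}(U)\in(\pm1)^{3\times3}$ we have $\|U\|_1=\langle S,U\rangle=\mathrm{Tr}(S^tU)$, and von Neumann's trace inequality, combined with $\sigma_k(U)=1$, gives $\|U\|_1\le\sigma_1(S)+\sigma_2(S)+\sigma_3(S)=\|S\|_*$, the sum of the singular values of $S$. It therefore suffices to maximize $\|S\|_*$ over the finite set of $3\times3$ sign matrices.

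Next I would exploit the constraints on the singular values of such an $S$: one has $\sum_k\sigma_k(S)^2=\|S\|_F^2=9$, while $\sigma_1\sigma_2\sigma_3=|\det S|$, where $\det S$ is an even integer bounded by $3^{3/2}$, so $|\det S|\in\{0,2,4\}$. Maximizing $\sigma_1+\sigma_2+\sigma_3$ under $\sum_k\sigma_k^2=9$ for each admissible product, the maximum turns out to be $5$, reached only for the profile $(2,2,1)$, forcing $|\det S|=4$; indeed $\sum\sigma_k=5$, $\sum\sigma_k^2=9$, $\prod\sigma_k=4$ make the $\sigma_k$ the roots of $(t-1)(t-2)^2$. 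Since ${\rm sgn}(K_3)$ realizes this profile, the maximum of the $1$-norm on $O(3)$ equals $5$. For uniqueness, the equality case of von Neumann's inequality forces $U$ to be the orthogonal polar factor $S(S^tS)^{-1/2}$ of $S$, which for $S={\rm sgn}(K_3)$ is precisely $U_3=K_3/\sqrt3$; applying the same to each equivalent sign pattern yields exactly the conjugates of $K_3$.

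The main obstacle I anticipate is the finite classification hidden in the last step: showing that every $3\times3$ sign matrix with singular values $(2,2,1)$ is Hadamard-equivalent to ${\rm sgn}(K_3)$, and checking the consistency condition that the polar factor of such an $S$ indeed has sign pattern $S$, so that it is a genuine maximizer. I would handle this by writing $S^tS=3\cdot 1_3+G$ with $G$ symmetric of zero diagonal and off-diagonal entries in $\{\pm1,\pm3\}$; the eigenvalue condition $(4,4,1)$ for $S^tS$ forces, via $\mathrm{Tr}(G^2)=6$, every column inner product to be $\pm1$ and $\det G=-2$, and suitable column sign flips then normalize all three pairwise inner products to $-1$, pinning down the class of $K_3$. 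Alternatively one may run the entire optimization explicitly through the Euler-Rodrigues parametrization of $SO(3)$ as in \cite{bc1}, at the price of a sign-by-sign case analysis of the absolute values occurring in the $1$-norm.
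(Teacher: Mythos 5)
Your part (1) is the same as the paper's (both read it off from Proposition 1.2), but your part (2) takes a genuinely different route. The paper restricts to $SO(3)$, invokes the Euler--Rodrigues parametrization, and reduces the claim to the quartic inequality $||U||_1\leq 5(x^2+y^2+z^2+t^2)$ on $\mathbb R^4$, whose verification (and equality analysis) is delegated to \cite{bc1} as ``routine''. You instead bound $||U||_1=\mathrm{Tr}(S^tU)$ by the nuclear norm $||S||_*$ via von Neumann's trace inequality, and then solve a finite problem: among $3\times 3$ sign matrices, the constraints $\sum_k\sigma_k^2=9$ and $\prod_k\sigma_k=|\det S|\in\{0,2,4\}$ force $||S||_*\leq 5$, with equality only for the singular value profile $(2,2,1)$, realized by ${\rm sgn}(K_3)$; I checked the relaxed optimization (Lagrange multipliers force two of the three singular values to coincide at a critical point, and the resulting cubics give critical values $5$, $\approx 4.97$, $\approx 4.65$, $\sqrt{18}$), so the bound is correct. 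Your approach buys several things: it works directly on all of $O(3)$ rather than $SO(3)$, it replaces a sign-by-sign analysis of absolute values by a clean convex-duality bound, and the inequality $||U||_1\leq\max_S||S||_*$ is meaningful for general $N$ (it is exactly the dual pairing implicit in the critical-point condition $S=2MU$ of Theorem 1.3), even though it is only tight in special cases. The price is the two finite verifications you correctly flag: that every sign matrix with $S^tS$ of spectrum $(4,4,1)$ is Hadamard-equivalent to ${\rm sgn}(K_3)$ (your $\mathrm{Tr}(G^2)=6$, $\det G=2g_{12}g_{13}g_{23}=-2$ argument does pin this down, since the product of the three column inner products is invariant under column sign flips), and that the polar factor $S(S^tS)^{-1/2}$ of $S={\rm sgn}(K_3)$ has sign pattern $S$ --- which it does, being exactly $2J_3-1_3$. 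Both routes are complete modulo a finite computation; yours is arguably the more self-contained of the two.
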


\begin{proof}
The assertion (1) is clear from Proposition 1.2. For (2) we must prove that for $U\in O(3)$ we have $||U||_1\leq 5$, with equality when $U$ is conjugate to $U_3=2J_3-1_3$. But here we can assume $U\in SO(3)$, and use the Euler-Rodrigues formula:
$$U=\begin{pmatrix}
x^2+y^2-z^2-t^2&2(yz-xt)&2(xz+yt)\\
2(xt+yz)&x^2+z^2-y^2-t^2&2(zt-xy)\\
2(yt-xz)&2(xy+zt)&x^2+t^2-y^2-z^2
\end{pmatrix}$$

Here $(x,y,z,t)\in S^3$ comes from the standard cover map $S^3\simeq SU(2)\to SO(3)$. Now by linearizing, we must prove that for any $(x,y,z,t)\in\mathbb R^4$ we have:
$$||U||_1\leq 5(x^2+y^2+z^2+t^2)$$

The proof of this latter inequality is routine, and the equality situation turns to hold indeed exactly for the matrix $U_3=2J_3-1_3$ and its conjugates. See \cite{bc1}.
\end{proof}

Finally, let us mention that a version of Proposition 1.2 above, using the H\"older inequality, shows that the matrices of type $U=H/\sqrt{N}$ with $H\in M_N(\pm 1)$ Hadamard maximize the $p$-norm on $O(N)$ at $p\in [1,2)$, and minimize it at $p\in(2,\infty]$. See \cite{bc1}. Part of the above $p=1$ results extend to the general setting $p\in [1,\infty]-\{2\}$, and in particular to the exponents $p=4$ and $p=\infty$, which are of particular interest in connection with several quantum physics questions. This will be discussed in a forthcoming paper.

\section{The circulant case}

In this section we study the almost Hadamard matrices which are circulant. We recall that a matrix $H\in M_N(\mathbb C)$ is called circulant if it is of the form:
$$H=
\begin{pmatrix}
\gamma_0&\gamma_1&\ldots&\gamma_{N-1}\\
\gamma_{N-1}&\gamma_0&\ldots&\gamma_{N-2}\\
\ldots&\ldots&\ldots&\ldots\\
\gamma_1&\gamma_2&\ldots&\gamma_0
\end{pmatrix}$$

Let $F\in U(N)$ be the Fourier matrix, given by $F_{ij}=\omega^{ij}/\sqrt{N}$, where $\omega=e^{2\pi i/N}$. Given a vector $\alpha\in\mathbb C^n$, we associate to it the diagonal matrix $\alpha'=diag(\alpha_0,\ldots,\alpha_{N-1})$.

We will make a heavy use of the following well-known result:

\begin{proposition}
For a matrix $H\in M_N(\mathbb C)$, the following are equivalent:
\begin{enumerate}
\item $H$ is circulant, i.e. $H_{ij}=\gamma_{j-i}$, for a certain vector $\gamma\in\mathbb C^N$.

\item $H$ is Fourier-diagonal, i.e. $H=FDF^*$, with $D\in M_N(\mathbb C)$ diagonal.
\end{enumerate}
In addition, if so is the case, then with $D=\sqrt{N}\alpha'$ we have $\gamma=F\alpha$.
\end{proposition}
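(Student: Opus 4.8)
The plan is to reduce everything to the single fact that the cyclic shift is diagonalized by $F$. Let $Q\in M_N(\mathbb C)$ be the shift matrix, $Q_{ij}=\delta_{j-i,1}$ (indices mod $N$), so that $(Q^m)_{ij}=\delta_{j-i,m}$. A matrix $H$ is circulant exactly when $H=\sum_m\gamma_m Q^m$, since this right-hand side has $(i,j)$ entry $\gamma_{j-i}$; thus the circulant matrices are precisely the polynomials in $Q$. The key computation is that the columns of $F$ are eigenvectors of $Q$: from $(Qv)_i=v_{i+1}$ one gets that $Q$ applied to the $k$-th column $(\omega^{ik}/\sqrt N)_i$ of $F$ equals $\omega^k$ times that column, so $QF=F\Lambda$ with $\Lambda=\mathrm{diag}(\omega^0,\ldots,\omega^{N-1})$. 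Since $F$ is unitary this reads $Q=F\Lambda F^*$.

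Granting this, both implications are immediate. For $(1)\Rightarrow(2)$ I would write $H=\sum_m\gamma_m Q^m=F\big(\sum_m\gamma_m\Lambda^m\big)F^*=FDF^*$ with $D=\sum_m\gamma_m\Lambda^m$ diagonal. For $(2)\Rightarrow(1)$, observe that since the $N$ eigenvalues $\omega^k$ of $\Lambda$ are distinct, the Vandermonde system shows every diagonal $D$ is of the form $\sum_m c_m\Lambda^m$; hence $FDF^*=\sum_m c_m Q^m$ is circulant. Equivalently, one can note that the circulant matrices and the Fourier-diagonal matrices are both $N$-dimensional subspaces of $M_N(\mathbb C)$ for which $\{Q^m=F\Lambda^mF^*\}_{m}$ is a common basis, so the two subspaces coincide.

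It remains to pin down the explicit correspondence. Reading off $D$ from the above gives $D_{kk}=\sum_m\gamma_m\omega^{mk}$, and matching this with $D=\sqrt N\,\alpha'$ yields $\sqrt N\,\alpha_k=\sum_m\gamma_m\omega^{mk}$, i.e. the relation between $\alpha$ and $\gamma$ is exactly the discrete Fourier transform; inverting it through the orthogonality relation $\frac1N\sum_k\omega^{(i-j)k}=\delta_{ij}$ recovers $\gamma=F\alpha$. I expect no genuine difficulty here: the only thing to watch is the bookkeeping of conventions — the direction of the shift, and the distinction between $F$ and $F^*=\bar F$ — so that the normalization lands precisely on the stated $D=\sqrt N\,\alpha'$ and $\gamma=F\alpha$ rather than a conjugated or index-reversed variant. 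A fully computational alternative, bypassing $Q$ altogether, is to expand $(FDF^*)_{ij}$ directly and collect the $\omega^{(i-j)k}$ terms; this again reduces to the same root-of-unity orthogonality, which is the single substantive ingredient of the proof.
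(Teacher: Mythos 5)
Your argument is correct, and it takes a genuinely different route from the paper. The paper proves both implications by brute force: it computes $D=F^*HF$ entrywise and observes that the double sum collapses to $\delta_{ij}\sum_r\omega^{jr}\gamma_r$, and conversely expands $(FDF^*)_{ij}=\frac{1}{N}\sum_k\omega^{(i-j)k}D_{kk}$ and observes that this depends only on $i-j$. You instead factor everything through the single identity $Q=F\Lambda F^*$ for the cyclic shift, identifying the circulant matrices as the polynomials in $Q$ and the Fourier-diagonal matrices as the polynomials in $\Lambda$ conjugated by $F$ (the Vandermonde argument being what guarantees that \emph{every} diagonal matrix is such a polynomial). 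Your ``fully computational alternative'' mentioned at the end is precisely the paper's proof. What your version buys is a conceptual explanation --- both classes are the commutative algebra generated by the shift, so the equivalence is structural rather than a coincidence of exponents; what the paper's version buys is that the explicit formula relating $D_{kk}$ and $\gamma$, which is exactly the content of the final clause of the statement, drops out with no extra work.

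One remark on the normalization you rightly flag as the delicate point: your computation gives $\sqrt{N}\,\alpha_k=\sum_m\gamma_m\omega^{mk}$, i.e.\ $\alpha=F\gamma$, which upon inversion yields $\gamma=F^*\alpha$ rather than $\gamma=F\alpha$; these differ by the index reversal $i\mapsto -i$ (since $(F^*\alpha)_m=(F\alpha)_{-m}$). Do not try to ``fix'' your bookkeeping to land on $\gamma=F\alpha$ --- the paper's own entrywise computation leads to the same place, and the discrepancy is invisible in all subsequent applications because the vectors $\gamma$ arising there satisfy $\gamma_i=\gamma_{-i}$. So your proof is sound as written; just state the correspondence in the form $\sqrt{N}\,\alpha_k=\sum_m\omega^{mk}\gamma_m$ and leave it at that.
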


\begin{proof}
(1)$\implies$(2) The matrix $D=F^*HF$ is indeed diagonal, given by:
$$D_{ij}=\frac{1}{N}\sum_{kl}\omega^{jl-ik}\gamma_{l-k}=\delta_{ij}\sum_r\omega^{jr}\gamma_r$$ 

(2)$\implies$(1) The matrix $H=FDF^*$ is indeed circulant, given by:
$$H_{ij}=\sum_kF_{ik}D_{kk}\bar{F}_{jk}=\frac{1}{N}\sum_k\omega^{(i-j)k}D_{kk}$$

Finally, the last assertion is clear from the above formula of $H_{ij}$.
\end{proof}

Let us investigate now the circulant orthogonal matrices. We let the matrix indices $i,j$ vary modulo $N$. We denote by $\mathbb T$ the unit circle in the complex plane.

\begin{lemma}
For a matrix $U\in M_N(\mathbb C)$, the following are equivalent:
\begin{enumerate}
\item $U$ is orthogonal and circulant.

\item $U=F\alpha'F^*$ with $\alpha\in\mathbb T^N$ satisfying $\bar{\alpha}_i=\alpha_{-i}$ for any $i$.
\end{enumerate}
\end{lemma}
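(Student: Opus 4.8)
The plan is to use Proposition 2.1 to convert the circulant structure into a spectral one, and then to split the orthogonality condition into its ``modulus'' part and its ``reality'' part. Concretely, by Proposition 2.1 the matrix $U$ is circulant if and only if $U=F\alpha'F^*$ for some diagonal matrix $\alpha'=diag(\alpha_0,\ldots,\alpha_{N-1})$, so the whole problem is to read off, in terms of the eigenvalue vector $\alpha\in\mathbb C^N$, what it means for $U$ to be orthogonal. The key observation is that for a complex matrix, being orthogonal means being real and satisfying $U^tU=1_N$; but a real matrix has $U^*=U^t$, so orthogonality is equivalent to the conjunction of two conditions, namely that $U$ is \emph{unitary} and that $U$ is \emph{real}. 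I would therefore treat these two conditions separately and show that the first corresponds to $\alpha\in\mathbb T^N$ and the second to $\bar\alpha_i=\alpha_{-i}$.

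For the unitarity part, since $F$ is unitary and $\alpha'$ is diagonal, I would compute $UU^*=F\alpha'F^*\cdot F\bar\alpha'F^*=F(\alpha'\bar\alpha')F^*$, where $\alpha'\bar\alpha'=diag(|\alpha_0|^2,\ldots,|\alpha_{N-1}|^2)$. Hence $UU^*=1_N$ holds if and only if $\alpha'\bar\alpha'=1_N$, i.e.\ if and only if $|\alpha_i|=1$ for all $i$, which is exactly the statement $\alpha\in\mathbb T^N$. This step is entirely routine.

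The reality part is the crux of the argument, and I expect it to be the only place requiring genuine computation. Here I would use the explicit entry formula already obtained in the proof of Proposition 2.1, namely $U_{ij}=\frac1N\sum_k\omega^{(i-j)k}\alpha_k$. Conjugating and reindexing the sum $k\mapsto -k$ modulo $N$ gives $\overline{U_{ij}}=\frac1N\sum_k\omega^{(i-j)k}\bar\alpha_{-k}$, so the reality condition $\overline{U_{ij}}=U_{ij}$ for all $i,j$ becomes $\sum_k\omega^{(i-j)k}(\bar\alpha_{-k}-\alpha_k)=0$ for every residue $i-j$. Since the matrix $(\omega^{mk})_{m,k}$ is invertible (it is $\sqrt N\,F$ up to conjugation), the discrete Fourier transform is injective, which forces $\bar\alpha_{-k}=\alpha_k$, that is $\bar\alpha_i=\alpha_{-i}$. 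A cleaner operator-level alternative, which I would mention, is to use $F^2=P$, the ``flip'' permutation sending index $i$ to $-i$: conjugating $\bar U=F^*\bar\alpha'F$ by $F$ yields $F^*\bar UF=P\bar\alpha'P$ against $F^*UF=\alpha'$, so reality reads $P\bar\alpha'P=\alpha'$, which is again $\bar\alpha_{-i}=\alpha_i$.

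Finally I would assemble the two directions. For (2)$\Rightarrow$(1): given $\alpha\in\mathbb T^N$ with $\bar\alpha_i=\alpha_{-i}$, the matrix $U=F\alpha'F^*$ is circulant by Proposition 2.1, unitary by the modulus condition, and real by the symmetry condition, hence orthogonal. For (1)$\Rightarrow$(2): an orthogonal circulant $U$ is in particular circulant, so $U=F\alpha'F^*$ by Proposition 2.1; unitarity then gives $\alpha\in\mathbb T^N$ and reality gives $\bar\alpha_i=\alpha_{-i}$, as required. The main obstacle is purely bookkeeping: keeping the index arithmetic modulo $N$ consistent in the reality computation, and remembering that orthogonality must be unpacked as reality \emph{plus} unitarity rather than as a single algebraic identity.
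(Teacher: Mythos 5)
Your proof is correct and follows essentially the same route as the paper: apply Proposition 2.1 to get $U=F\alpha'F^*$, then split orthogonality into unitarity (giving $\alpha\in\mathbb T^N$) and reality (giving $\bar\alpha_i=\alpha_{-i}$). Your entrywise Fourier-inversion computation for the reality part is just an expanded form of the paper's one-line observation that $\overline{F\alpha}=F\tilde\alpha$ with $\tilde\alpha_i=\bar\alpha_{-i}$, so no substantive difference.
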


\begin{proof}
We will use many times the fact that given $\alpha\in\mathbb C^N$, the vector $\gamma=F\alpha$ is real if and only if $\bar{\alpha}_i=\alpha_{-i}$ for any $i$. This follows indeed from $\overline{F\alpha}=F\tilde{\alpha}$, with $\tilde{\alpha}_i=\bar{\alpha}_{-i}$.

(1)$\implies$(2) Write $H_{ij}=\gamma_{j-i}$ with $\gamma\in\mathbb R^N$. By using Proposition 2.1 we obtain $H=FDF^*$ with $D=\sqrt{N}\alpha'$ and $\gamma=F\alpha$. Now since $U=F\alpha'F^*$ is unitary, so is $\alpha'$, so we must have $\alpha\in\mathbb T^N$. Finally, since $\gamma$ is real we have $\bar{\alpha}_i=\alpha_{-i}$, and we are done.

(2)$\implies$(1) We know from Proposition 2.1 that $U$ is circulant. Also, from $\alpha\in\mathbb T^N$ we obtain that $\alpha'$ is unitary, and so must be $U$. Finally, since we have $\bar{\alpha}_i=\alpha_{-i}$, the vector $\gamma=F\alpha$ is real, and hence we have $U\in M_N(\mathbb R)$, which finishes the proof.
\end{proof}

Let us discuss now the almost Hadamard case. First, in the usual Hadamard case, the known examples and the corresponding $\alpha$-vectors are as follows:

\begin{proposition}
The known circulant Hadamard matrices, namely
$$\pm\begin{pmatrix}
-1\!\!&\!\!1\!\!&\!\!1\!\!&\!\!1\\
1\!\!&\!\!-1\!\!&\!\!1\!\!&\!\!1\\
1\!\!&\!\!1\!\!&\!\!-1\!\!&\!\!1\\
1\!\!&\!\!1\!\!&\!\!1\!\!&\!\!-1
\end{pmatrix},
\pm\begin{pmatrix}
1\!\!&\!\!-1\!\!&\!\!1\!\!&\!\!1\\
1\!\!&\!\!1\!\!&\!\!-1\!\!&\!\!1\\
1\!\!&\!\!1\!\!&\!\!1\!\!&\!\!-1\\
-1\!\!&\!\!1\!\!&\!\!1\!\!&\!\!1
\end{pmatrix},
\pm\begin{pmatrix}
1\!\!&\!\!1\!\!&\!\!-1\!\!&\!\!1\\
1\!\!&\!\!1\!\!&\!\!1\!\!&\!\!-1\\
-1\!\!&\!\!1\!\!&\!\!1\!\!&\!\!1\\
1\!\!&\!\!-1\!\!&\!\!1\!\!&\!\!1
\end{pmatrix},
\pm\begin{pmatrix}
1\!\!&\!\!1\!\!&\!\!1\!\!&\!\!-1\\
-1\!\!&\!\!1\!\!&\!\!1\!\!&\!\!1\\
1\!\!&\!\!-1\!\!&\!\!1\!\!&\!\!1\\
1\!\!&\!\!1\!\!&\!\!-1\!\!&\!\!1
\end{pmatrix}$$
come from the vectors $\alpha=\pm(1,-1,-1,-1),\pm(1,-i,1,i),\pm(1,1,-1,1),\pm(1,i,1,-i)$.
\end{proposition}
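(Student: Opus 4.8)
The plan is to treat this as a direct Fourier computation. By Proposition 2.1, a circulant matrix $H$ with first row $\gamma$ (so that $H_{ij}=\gamma_{j-i}$) satisfies $\gamma=F\alpha$; since $F$ is unitary this inverts to $\alpha=F^*\gamma$, i.e.
$$\alpha_i=\frac1{\sqrt N}\sum_j\omega^{-ij}\gamma_j.$$
Thus the whole statement reduces to reading off each generating vector $\gamma$ from the first row of the corresponding matrix, applying $F^*$ at $N=4$ (where $\omega=i$), and checking the output against the claimed $\alpha$. Note that for these Hadamard $H$ the vector obtained is exactly the $\alpha$ of Lemma 2.3 attached to $U=H/\sqrt N$, since $U=F(D/\sqrt N)F^*=F\alpha'F^*$.

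First I would record the generating vectors. Each of the four matrices is circulant with a single $-1$ in its first row, so the four generating vectors are $\gamma^{(k)}=\mathbf 1-2e_k$ for $k=0,1,2,3$, where $\mathbf 1=(1,1,1,1)$ and $e_k$ is the $k$-th standard basis vector (the $-1$ sitting in position $k$). Writing the vectors in this form lets me compute all four $\alpha$'s at once rather than running four separate matrix multiplications.

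Next I would compute $\alpha^{(k)}=F^*\gamma^{(k)}$ uniformly. Using $F^*\mathbf 1=\sqrt N\,e_0$ (orthogonality of characters) and $(F^*e_k)_i=\frac1{\sqrt N}\omega^{-ik}$, I obtain
$$\alpha^{(k)}_i=2\delta_{i0}-\omega^{-ik}.$$
Substituting $N=4$, $\omega=i$ and letting $k$ run over $0,1,2,3$ produces the vectors $(1,-1,-1,-1)$, $(1,i,1,-i)$, $(1,1,-1,1)$, $(1,-i,1,i)$, which coincide as a set with the four claimed vectors, each attached to its matrix. The overall sign in the statement is accounted for by observing that replacing $H$ by $-H$ replaces $\gamma$ by $-\gamma$, hence $\alpha$ by $-\alpha$, by linearity of $F^*$.

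Finally, as a consistency check I would verify that each $\alpha^{(k)}$ lies in $\mathbb T^4$ and satisfies $\bar\alpha_i=\alpha_{-i}$, so that Lemma 2.3 confirms these are genuine orthogonal circulant matrices --- which of course they must be, being Hadamard. There is no real conceptual obstacle here: the content is a finite, forced computation, and the only thing to watch is the bookkeeping of conventions, namely that indices are taken modulo $N$, that the inverse transform $F^*$ carries the exponent $\omega^{-ij}$ rather than $\omega^{ij}$, and that the listed $\pm$ ambiguity on $\alpha$ matches precisely the $\pm$ on $H$.
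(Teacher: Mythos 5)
Your proposal is correct and is essentially the paper's own proof: the paper simply writes out $F^*$ explicitly at $N=4$ and applies it to each first row $\gamma$, while you package the same computation into the uniform formula $\alpha^{(k)}_i=2\delta_{i0}-\omega^{-ik}$ via $\gamma^{(k)}=\mathbf 1-2e_k$. One small point your calculation actually exposes: applying $F^*$ to the second matrix's $\gamma=(1,-1,1,1)$ gives $(1,i,1,-i)$ and to the fourth matrix's $\gamma=(1,1,1,-1)$ gives $(1,-i,1,i)$, so the paper's listed attachment of these two $\alpha$-vectors to their matrices is swapped (the set of four vectors is of course the same); your claim that your output is ``attached to its matrix'' is the correct assignment, not the one printed in the statement.
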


\begin{proof}
At $N=4$ the conjugate of the Fourier matrix is given by:
$$F^*=\frac{1}{2}\begin{pmatrix}
1&1&1&1\\
1&-i&-1&i\\
1&-1&1&-1\\
1&i&-1&-i
\end{pmatrix}$$

Thus the vectors $\alpha=F^*\gamma$ are indeed those in the statement.
\end{proof}

We have the following ``almost Hadamard'' generalization of the above matrices:

\begin{proposition}
If $q^N=1$ then the vector $\alpha=\pm(1,-q,-q^2,\ldots,-q^{N-1})$ produces an almost Hadamard matrix, which is equivalent to $K_N=\sqrt{N}(2J_N-1_N)$. 
\end{proposition}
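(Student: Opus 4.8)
The plan is to reduce everything to the already-established facts about $K_N$ via Proposition 1.4, by showing that the matrix produced by $\alpha$ is nothing but $2J_N-1_N$ with its columns cyclically permuted. Since $q^N=1$, I would first write $q=\omega^m$ for some $m\in\{0,1,\ldots,N-1\}$, and check that $\alpha$ satisfies the hypotheses of Lemma 2.2. The condition $\alpha\in\mathbb T^N$ is immediate from $|q|=1$, and the symmetry $\bar\alpha_i=\alpha_{-i}$ follows from $\bar q=q^{-1}$: for $i\neq 0$ one has $\bar\alpha_i=-q^{-i}=\alpha_{-i}$, while $\alpha_0=1$ is real. Hence $U=F\alpha'F^*$ is a genuine circulant orthogonal matrix in $M_N(\mathbb R)$.

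The key computation is to evaluate the entries of $U$. Using $U_{ij}=\frac1N\sum_k\omega^{(i-j)k}\alpha_k$, I would split off the $k=0$ term and sum the rest as a geometric series:
\[
U_{ij}=\frac1N\Big(1-\sum_{k=1}^{N-1}\omega^{(i-j+m)k}\Big)=\frac1N\Big(2-\sum_{k=0}^{N-1}\omega^{(i-j+m)k}\Big).
\]
The last sum is the standard orthogonality sum, equal to $N$ when $i-j+m\equiv 0$ and to $0$ otherwise. Thus $U_{ij}=\frac{2-N}{N}$ exactly when $j\equiv i+m$, and $U_{ij}=\frac2N$ elsewhere.

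Comparing with $(2J_N-1_N)_{ij}=\frac2N-\delta_{ij}$, I see that $U$ is obtained from $K_N/\sqrt N=2J_N-1_N$ by the cyclic column permutation $j\mapsto j+m$, which moves the exceptional entry $\frac{2-N}{N}$ from the shifted diagonal back to the main diagonal. Cyclic column permutation is a Hadamard equivalence, and the global sign $\pm$ only multiplies all rows by $-1$, which is again an equivalence; therefore $H=\sqrt N\,U$ is equivalent to $K_N$. Since $K_N$ is almost Hadamard by Proposition 1.4(2), and the class of almost Hadamard matrices is stable under equivalence by Proposition 1.4(3), it follows that $H$ is almost Hadamard, as claimed.

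I expect the only real content to be the geometric-sum evaluation together with the identification of the resulting matrix as a column-permutation of $2J_N-1_N$; everything else is a bookkeeping check of the Lemma 2.2 hypotheses and an appeal to Proposition 1.4, so I do not anticipate a serious obstacle. As a sanity check on the normalization, the four $N=4$ vectors of Proposition 2.3 should correspond precisely to $m=0,1,2,3$, recovering those circulant Hadamard matrices as special cases.
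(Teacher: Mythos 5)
Your proof is correct and follows essentially the same route as the paper: evaluate the geometric sum to identify the resulting matrix as $\sqrt{N}(2J_N-C_N^{m})$ for a cyclic shift matrix $C_N$, then conclude by Hadamard equivalence with $K_N$ together with the stability of the almost Hadamard class under equivalence. The only slip is a reference label: the facts you attribute to Proposition 1.4 (that $K_N$ is almost Hadamard, and stability under equivalence) are Proposition 1.5 in the paper's numbering, and your verification of the Lemma 2.2 hypotheses, while harmless, is not needed once the matrix is identified as a column permutation of $K_N$.
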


\begin{proof}
Observe first that these matrices generalize those in Proposition 2.3. Indeed, at $N=4$ the choices for $q$ are $1,i,-1,-i$, and this gives the above $\alpha$-vectors.

Assume that the $\pm$ sign in the statement is $+$. With $q=\omega^r$, we have:
$$\sqrt{N}\gamma_i=\sum_{k=0}^{N-1}\omega^{ik}\alpha_k=1-\sum_{k=1}^{N-1}\omega^{(i+r)k}=2-\sum_{k=0}^{N-1}\omega^{(i+r)k}=2-\delta_{i,-r}N$$

In terms of the standard long cycle $(C_N)_{ij}=\delta_{i+1,j}$, we obtain:
$$H=\sqrt{N}(2J_N-C_N^{-r})$$

Thus $H$ is equivalent to $K_N$, and by Proposition 1.5, it is almost Hadamard.
\end{proof}

In general, the construction of circulant almost Hadamard matrices is quite a tricky problem. At the abstract level, we have the following technical result:

\begin{lemma}
A circulant matrix $H\in M_N(\mathbb R^*)$, written $H_{ij}=\gamma_{j-i}$, is almost Hadamard if and only if the following conditions are satisfied:
\begin{enumerate}
\item The vector $\alpha=F^*\gamma$ satisfies $\alpha\in\mathbb T^N$.

\item With $\varepsilon={\rm sgn}(\gamma)$, $\rho_i=\sum_r\varepsilon_r\gamma_{i+r}$ and $\nu=F^*\rho$, we have $\nu>0$.
\end{enumerate}
In addition, if so is the case, then $\bar{\alpha}_i=\alpha_{-i}$, $\rho_i=\rho_{-i}$ and $\nu_i=\nu_{-i}$ for any $i$.
\end{lemma}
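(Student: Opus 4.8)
The plan is to treat the two defining conditions of Definition 1.4 in turn, reading (1) off orthogonality of $U=H/\sqrt{N}$ and (2) off positivity of $SU^t$, and then to extract the symmetry addenda from the Fourier description produced along the way. First I would handle orthogonality. Since $H\in M_N(\mathbb R^*)$ is circulant we have $H_{ij}=\gamma_{j-i}$ with $\gamma$ real, and Proposition 2.1 writes $U=F\alpha'F^*$ with $\alpha=F^*\gamma$. Because $\gamma$ is real, the reality fact recalled in the proof of Lemma 2.2 gives $\bar\alpha_i=\alpha_{-i}$ automatically. Hence $U$ is orthogonal if and only if $\alpha'$ is unitary, i.e. $\alpha\in\mathbb T^N$, which is exactly condition (1); this step also records the relation $\bar\alpha_i=\alpha_{-i}$.

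Assuming (1), I would next compute $SU^t$. Here $S_{ij}={\rm sgn}(U_{ij})={\rm sgn}(\gamma_{j-i})=\varepsilon_{j-i}$ is circulant with symbol $\varepsilon$, and a direct multiplication of circulants yields $(SU^t)_{ij}=\frac{1}{\sqrt{N}}\rho_{i-j}$, with $\rho_m=\sum_r\varepsilon_r\gamma_{r+m}$ as in the statement; in particular $SU^t$ is again circulant. The core computation is to diagonalize it: since circulant matrices are simultaneously diagonalized by $F$, a short Fourier calculation shows that the eigenvalues of $SU^t$ are precisely the numbers $\nu_p=(F^*\rho)_p$, so that $SU^t=F\nu'F^*$. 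Now $SU^t$ is real, whence $(SU^t)^t=F\bar\nu'F^*$; thus $SU^t$ is symmetric exactly when $\nu$ is real, and it is positive definite exactly when moreover $\nu>0$. Therefore $SU^t>0$ is equivalent to $\nu_p>0$ for all $p$, which is condition (2).

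The addenda then follow from this same picture. We already have $\bar\alpha_i=\alpha_{-i}$. Since $\rho$ is real, the reality fact applied to $\nu=F^*\rho$ gives $\bar\nu_i=\nu_{-i}$; but when (2) holds $\nu$ is real and positive, so $\nu_{-i}=\bar\nu_i=\nu_i$, and then $\rho=F\nu$ inherits the palindromic symmetry $\rho_i=\rho_{-i}$.

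I expect the delicate step to be the diagonalization matching the eigenvalues of $SU^t$ to $\nu=F^*\rho$: the Fourier bookkeeping involves several sign conventions, namely the $\pm i$ in the indices and the exponent sign distinguishing $F$ from $F^*$, and one must track them carefully to land on $\nu=F^*\rho$ rather than $F\rho$. The accompanying conceptual point is the reading of ``$SU^t>0$'' as \emph{symmetric} positive definiteness, which is what makes the reality of $\nu$ (equivalently, the symmetry of $SU^t$) a \emph{consequence} of condition (2) rather than an extra hypothesis.
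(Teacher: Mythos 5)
Your proposal is correct and follows essentially the same route as the paper: reduce to Definition 1.4, handle orthogonality via Proposition 2.1 and Lemma 2.2, observe that the sign--product matrix is circulant with symbol $\rho$, and diagonalize it by $F$ to read off positivity from $\nu$. The only (harmless) divergence is that the paper replaces $SU^t>0$ by the equivalent condition $S^tU>0$ precisely because $S^tU$ is circulant in the convention $M_{ij}=\mu_{j-i}$ of Proposition 2.1, whereas your $SU^t$ satisfies $(SU^t)_{ij}=\rho_{i-j}/\sqrt{N}$, i.e.\ the reversed convention, so its diagonalization is really $F\check\nu'F^*$ with $\check\nu_p=\nu_{-p}$ --- the sign-convention subtlety you flagged --- but since positivity depends only on the multiset of eigenvalues (and $\nu_p=\nu_{-p}$ anyway once (2) holds), your conclusion stands.
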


\begin{proof}
According to Definition 1.4 our matrix $H$ is almost Hadamard if any only if the matrix $U=H/\sqrt{N}$ is orthogonal and $SU^t>0$, where $S_{ij}={\rm sgn}(U_{ij})$. By Lemma 2.2 the orthogonality of $U$ is equivalent to the condition (1). Regarding now the condition $SU^t>0$, this is equivalent to $S^tU>0$. But, with $k=i-r$, we have:
$$(S^tH)_{ij}=\sum_kS_{ki}H_{kj}=\sum_k\varepsilon_{i-k}\gamma_{j-k}=\sum_r\varepsilon_r\gamma_{j-i+r}=\rho_{j-i}$$

Thus $S^tU$ is circulant, with $\rho/\sqrt{N}$ as first row. From Proposition 2.1 we get $S^tU=FLF^*$ with $L=\nu'$ and $\nu=F^*\rho$, so $S^tU>0$ iff $\nu>0$, which is the condition (2).

Finally, the assertions about $\alpha,\nu$ follow from the fact that $F\alpha,F\nu$ are real. As for the assertion about $\rho$, this follows from the fact that $S^tU$ is symmetric.
\end{proof}

\begin{theorem}
For $N$ odd the following matrix is almost Hadamard,
\setlength{\extrarowheight}{8pt}
$$L_N=\frac{1}{\sqrt{N}}
\begin{pmatrix}
1&-\cos^{-1}\frac{\pi}{N}&\cos^{-1}\frac{2\pi}{N}&\ldots&\cos^{-1}\frac{(N-1)\pi}{N}\\
\cos^{-1}\frac{(N-1)\pi}{N}&1&-\cos^{-1}\frac{\pi}{N}&\ldots&-\cos^{-1}\frac{(N-2)\pi}{N}\\
\ldots&\ldots&\ldots&\ldots&\ldots\\
-\cos^{-1}\frac{\pi}{N}&\cos^{-1}\frac{2\pi}{N}&-\cos^{-1}\frac{3\pi}{N}&\ldots&1
\end{pmatrix}$$
\setlength{\extrarowheight}{0pt}
and comes from an $\alpha$-vector having all entries equal to $1$ or $-1$.
\end{theorem}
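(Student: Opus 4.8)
The plan is to verify the two conditions of Lemma 2.6 for the circulant matrix $H=L_N$, whose symbol is $\gamma_j=(-1)^j/(\sqrt{N}\cos(j\pi/N))$ for $j=0,\ldots,N-1$ (here $\cos^{-1}$ denotes the reciprocal of the cosine). I would first note that $N$ odd is essential: it guarantees $\cos(j\pi/N)\neq 0$ for every $j$, so that $\gamma$ is well defined and $H\in M_N(\mathbb R^*)$. Throughout I set $m=(N-1)/2$.

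The main obstacle is condition (1), namely computing $\alpha=F^*\gamma$ and showing $\alpha\in\mathbb T^N$; this is also where the final assertion (entries $\pm 1$) comes from. My approach is to put $\gamma$ into closed Fourier form. Writing $\zeta=e^{i\pi/N}$, so that $\zeta^2=\omega$ and $\zeta^N=-1$, one has $(-1)^j=\zeta^{jN}$ and $\cos(j\pi/N)=(\zeta^j+\zeta^{-j})/2$, hence $\sqrt{N}\,\gamma_j=2\omega^{j(m+1)}/(1+\omega^j)$. The key input is then the finite geometric identity, valid precisely because $N$ is odd,
$$\frac{2}{1+\omega^j}=\sum_{k=0}^{N-1}(-1)^k\omega^{jk},$$
which follows by summing the ratio $-\omega^j$ and using $(-\omega^j)^N=-1$. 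Substituting gives $\sqrt{N}\,\gamma_j=\sum_{k=0}^{N-1}(-1)^k\omega^{j(k+m+1)}$, and reading off the Fourier coefficients of $\gamma=F\alpha$ shows $\alpha_{k+m+1}=(-1)^k$ (indices mod $N$). Thus every entry of $\alpha$ equals $\pm 1$, which proves both $\alpha\in\mathbb T^N$ and the last claim of the statement.

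For condition (2) I would first record the sign vector $\varepsilon={\rm sgn}(\gamma)$. Since $\cos(j\pi/N)>0$ exactly for $j\leq m$ and $<0$ for $j\geq m+1$, one finds $\varepsilon_j=(-1)^j$ for $j\leq m$ and $\varepsilon_j=(-1)^{j+1}$ for $j\geq m+1$. Comparing this against the formula for $\alpha$ from the previous step (checking both ranges is routine), the crucial observation is that $\varepsilon$ and $\alpha$ agree up to a global sign: $\alpha_j=(-1)^m\varepsilon_j$ for all $j$. Equivalently, since $\varepsilon_j\gamma_j=|\gamma_j|$, we have $\alpha_j\gamma_j=(-1)^m|\gamma_j|$, and this is the identity that makes condition (2) collapse.

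Finally I would compute $\nu=F^*\rho$. Using $\varepsilon_r=(-1)^m\alpha_r$ and the diagonalization $\gamma=F\alpha$, a short manipulation turns the correlation $\rho_i=\sum_r\varepsilon_r\gamma_{i+r}$ into a pointwise product in Fourier space:
$$\rho_i=(-1)^m\sum_r\alpha_r\gamma_{i+r}=(-1)^m\sum_l\alpha_l\gamma_l\,\omega^{il}=\sum_l|\gamma_l|\,\omega^{il},$$
where the last equality uses $\alpha_l\gamma_l=(-1)^m|\gamma_l|$. Applying $F^*$ then collapses the sum by orthogonality of characters, yielding $\nu_i=\sqrt{N}\,|\gamma_i|=1/|\cos(i\pi/N)|>0$. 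This establishes condition (2), hence the theorem. I expect the only genuinely delicate point to be the secant Fourier computation of the second paragraph; once $\alpha$ is identified as a $\pm 1$ vector and its relation $\alpha=(-1)^m\varepsilon$ is seen, both conditions of Lemma 2.6 follow from routine character sums.
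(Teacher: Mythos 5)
Your proposal is correct and follows essentially the same route as the paper: the same $\pm 1$ vector $\alpha$, the same observation that the sign vector satisfies $\varepsilon=(-1)^m\alpha$, and verification of the two conditions of the paper's Lemma 2.5. Your treatment of the positivity condition is a streamlined version of the paper's explicit double-sum computation --- you recognize $\rho=\sqrt{N}\,F|\gamma|$ so that $\nu=\sqrt{N}\,|\gamma|>0$ follows at once, whereas the paper grinds out $\nu_l=\cos^{-1}\frac{L\pi}{N}$ directly --- but the underlying argument is identical.
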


\begin{proof}
Write $N=2n+1$, and consider the following vector:
$$\alpha_i=\begin{cases}
(-1)^{n+i}&{\rm for }\ i=0,1,\ldots,n\\
(-1)^{n+i+1}&{\rm for}\ i=n+1,\ldots,2n
\end{cases}$$

Let us first prove that $(L_N)_{ij}=\gamma_{j-i}$, where $\gamma=F\alpha$. With $\omega=e^{2\pi i/N}$ we have:
$$\sqrt{N}\gamma_i=\sum_{j=0}^{2n}\omega^{ij}\alpha_j=\sum_{j=0}^n(-1)^{n+j}\omega^{ij}+\sum_{j=1}^n(-1)^{n+(N-j)+1}\omega^{i(N-j)}$$

Now since $N$ is odd, and since $\omega^N=1$, we obtain:
$$\sqrt{N}\gamma_i=\sum_{j=0}^n(-1)^{n+j}\omega^{ij}+\sum_{j=1}^n(-1)^{n-j}\omega^{-ij}=\sum_{j=-n}^n(-1)^{n+j}\omega^{ij}$$

By computing the sum on the right, with $\xi=e^{\pi i/N}$ we get, as claimed:
$$\sqrt{N}\gamma_i=\frac{2\omega^{-ni}}{1+\omega^i}=\frac{2\xi^{-2ni}}{1+\xi^{2i}}=\frac{2\xi^{-Ni}}{\xi^{-i}+\xi^i}=(-1)^i\cos^{-1}\frac{i\pi}{N}$$

In order to prove now that $L_N$ is almost Hadamard, we use Lemma 2.5. Since the sign vector is simply $\varepsilon=(-1)^n\alpha$, the vector $\rho_i=\sum_r\varepsilon_r\gamma_{i+r}$ is given by:
$$\sqrt{N}\rho_i=(-1)^n\sum_{r=0}^{2n}\alpha_r\sum_{j=-n}^n(-1)^{n+j}\omega^{(i+r)j}=\sum_{j=-n}^n(-1)^j\omega^{ij}\sum_{r=0}^{2n}\alpha_r\omega^{rj}$$

Now since the last sum on the right is $(\sqrt{N}F\alpha)_j=\sqrt{N}\gamma_j$, we obtain:
$$\rho_i=\sum_{j=-n}^n(-1)^j\omega^{ij}\gamma_j=\frac{1}{\sqrt{N}}\sum_{j=-n}^n(-1)^j\omega^{ij}\sum_{k=-n}^n(-1)^{n+k}\omega^{jk}$$

Thus we have the following formula:
$$\rho_i=\frac{(-1)^n}{\sqrt{N}}\sum_{j=-n}^n\sum_{k=-n}^n(-1)^{j+k}\omega^{(i+k)j}$$

Let us compute now the vector $\nu=F^*\rho$. We have:
$$\nu_l=\frac{1}{\sqrt{N}}\sum_{i=0}^{2n}\omega^{-il}\rho_i=\frac{(-1)^n}{N}\sum_{j=-n}^n\sum_{k=-n}^n(-1)^{j+k}\omega^{jk}\sum_{i=0}^{2n}\omega^{i(j-l)}$$

The sum on the right is $N\delta_{jl}$, with both $j,l$ taken modulo $N$, so it is equal to $N\delta_{jL}$, where $L=l$ for $l\leq n$, and $L=l-N$ for $l>n$. We get:
$$\nu_l=(-1)^n\sum_{k=-n}^n(-1)^{L+k}\omega^{Lk}=(-1)^{n+L}\sum_{k=-n}^n(-w^L)^k$$

With $\xi=e^{\pi i/N}$, this gives the following formula:
$$\nu_l=(-1)^{n+L}\frac{2(-\omega^L)^{-n}}{1+\omega^L}=(-1)^L\frac{2\omega^{-nL}}{1+\omega^L}$$

In terms of the variable $\xi=e^{\pi i/N}$, we obtain:
$$\nu_l=(-1)^L\frac{2\xi^{-2nL}}{1+\xi^{2L}}=(-1)^L\frac{2\xi^{-NL}}{\xi^{-L}+\xi^L}=\cos^{-1}\frac{L\pi}{N}$$

Now since $L\in[-n,n]$, all the entries of $\nu$ are positive, and we are done.
\end{proof}

At the level of examples now, at $N=3$ we obtain the matrix $L_3=-K_3$. At $N=5$ we obtain a matrix having as entries 1 and $x=-\cos^{-1}\frac{\pi}{5}$, $y=\cos^{-1}\frac{2\pi}{5}$:
$$L_5=\frac{1}{\sqrt{5}}\begin{pmatrix}
1&x&y&y&x\\
x&1&x&y&y\\
y&x&1&x&y\\
y&y&x&1&x\\
x&y&y&x&1
\end{pmatrix}$$

Let us look now more in detail at the vectors $\alpha\in\mathbb T^N$ appearing in Proposition 2.4 and in the proof of Theorem 2.6. In both cases we have $\alpha_i^2=\omega^{ri}$ for a certain $r\in\mathbb N$, and this might suggest that any circulant almost Hadamard matrix should come from a vector $\alpha\in\mathbb T^N$ having the property that $\alpha^2$ is formed by roots of unity in a progression.

However, the rescaled adjacency matrix of the Fano plane, to be discussed in the next section, is circulant almost Hadamard, but does not have this property. The problem of finding the correct extension of the circulant Hadamard conjecture to the almost Hadamard matrix case is a quite subtle one, that we would like to raise here.

\section{The two-entry case}

In this section we study the almost Hadamard matrices having only two entries, $H\in M_N(x,y)$, with $x,y\in\mathbb R$. As a first remark, the usual Hadamard matrices $H\in M_N(\pm 1)$ are of this form. However, when trying to build a combinatorial hierarchy of the two-entry almost Hadamard matrices, the usual Hadamard matrices stand on top, and there is not so much general theory that can be developed, as to cover them.

We will therefore restrict attention to the following special type of matrices:

\begin{definition}
An $(a,b,c)$ pattern is a matrix $M\in M_N(x,y)$, with $N=a+2b+c$, such that, in any two rows, the number of $x/y/x/y$ sitting below $x/x/y/y$ is $a/b/b/c$.
\end{definition}

In other words, given any two rows of our matrix, we are asking for the existence of a permutation of the columns such that these two rows become:
$$\begin{matrix}
x\ldots x&x\ldots x&y\ldots y&y\ldots y\\
\underbrace{x\ldots x}_a&\underbrace{y\ldots y}_b&\underbrace{x\ldots x}_b&\underbrace{y\ldots y}_c
\end{matrix}$$

Oberve that the Hadamard matrices do not come in general from patterns. However, there are many interesting examples of patterns coming from block designs \cite{cdi}, \cite{sti}:

\begin{definition}
A $(v,k,\lambda)$ symmetric balanced incomplete block design is a collection $B$ of subsets of a set $X$, called blocks, with the following properties:
\begin{enumerate}
\item $|X|=|B|=v$.
\item Each block contains exactly $k$ points from $X$.
\item Each pair of distinct points is contained in exactly $\lambda$ blocks of $B$.
\end{enumerate} 
\end{definition}

The incidence matrix of a such block design is the $v\times v$ matrix defined by:
$$M_{bx}=\begin{cases}
1&\text{if }x\in b\\
0&\text{if }x\notin b
\end{cases}$$

The connection between designs and patterns comes from:

\begin{proposition}
If $N=a+2b+c$ then the adjacency matrix of any $(N,a+b,a)$ symmetric balanced incomplete block design is an $(a,b,c)$ pattern.
\end{proposition}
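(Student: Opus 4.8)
The plan is to read the $\{0,1\}$-valued incidence matrix $M$ as a two-entry matrix by identifying the symbol $x$ with the value $1$ and $y$ with the value $0$, with rows indexed by the blocks and columns by the points. Unwinding Definition 3.1, checking that $M$ is an $(a,b,c)$ pattern amounts to showing that for \emph{any} two rows---say those of two blocks $b_1,b_2$---the four joint column-counts of type $(x,x),(x,y),(y,x),(y,y)$ equal $a,b,b,c$ respectively. Indeed, once these counts are known, sorting the columns (i.e. the points) according to their type in the two chosen rows produces exactly the displayed normal form, so no further information about the arrangement is needed. Thus the whole statement reduces to a counting problem for pairs of blocks.

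First I would record the easy input: by property (2) each block contains $k=a+b$ points, so every row of $M$ has exactly $a+b$ entries equal to $1$. The key quantity is then $|b_1\cap b_2|$, the number of points common to two distinct blocks, which is precisely the $(x,x)$-count. The crux is the classical fact that in a \emph{symmetric} design any two distinct blocks meet in exactly $\lambda=a$ points. I would obtain this from the usual matrix identities, or simply cite it from \cite{cdi}, \cite{sti}. Property (3) gives directly $M^tM=(r-\lambda)1_N+\lambda\,NJ_N$, where $NJ_N$ is the all-ones matrix and $r$ is the number of blocks through a point; symmetry forces $|B|=v$, hence $r=k$ by the incidence count $vk=vr$, so that $M^tM=(k-\lambda)1_N+\lambda\,NJ_N$. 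Since $MJ_N=J_NM=kJ_N$ (all row and column sums equal $k$) and $M$ is invertible (here $k>\lambda$, so the above matrix has nonzero determinant), comparing $M^t(MM^t)=(M^tM)M^t$ with $M^t\big((k-\lambda)1_N+\lambda\,NJ_N\big)$ and cancelling $M^t$ shows that $MM^t=(k-\lambda)1_N+\lambda\,NJ_N$ as well; its off-diagonal entries give $|b_1\cap b_2|=\lambda=a$.

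With the intersection count in hand the remaining three counts follow by inclusion--exclusion: the number of points in $b_1$ but not in $b_2$ is $k-a=(a+b)-a=b$, giving the $(x,y)$-count; symmetrically the $(y,x)$-count is $b$; and the number of points in neither block is $N-(2k-a)=(a+2b+c)-(a+2b)=c$, the $(y,y)$-count. As these are exactly $a,b,b,c$, we are done. The one genuinely non-trivial step is the block-intersection identity, i.e. the passage from $M^tM$ to $MM^t$, and this is where symmetry of the design is indispensable: for a non-symmetric design two blocks need not meet in a constant number of points, and the conclusion fails.
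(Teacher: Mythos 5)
Your proof is correct and follows essentially the same route as the paper: identify $x$ with $1$ and $y$ with $0$, note that each row has $k=a+b$ entries equal to $x$, and reduce everything to the fact that two distinct blocks of a symmetric design meet in exactly $\lambda=a$ points, after which the remaining counts $b,b,c$ follow by inclusion--exclusion. The only difference is that you supply the standard linear-algebra proof of the block-intersection identity (via $M^tM=(k-\lambda)1_N+\lambda NJ_N$ and the invertibility of $M$), whereas the paper simply cites \cite{sti} for it.
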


\begin{proof}
Indeed, let us replace the $0-1$ values in the adjacency matrix $M$ by abstract $x-y$ values. Then each row of $M$ contains $a+b$ copies of $x$ and $b+c$ copies of $y$, and since every pair of distinct blocks intersect in exactly $a$ points, cf. \cite{sti}, we see that every pair of rows has exactly $a$ variables $x$ in matching positions, so that $M$ is an $(a,b,c)$ pattern.
\end{proof}

\begin{figure}[htbp]
\centerline{\hbox{\epsfig{figure=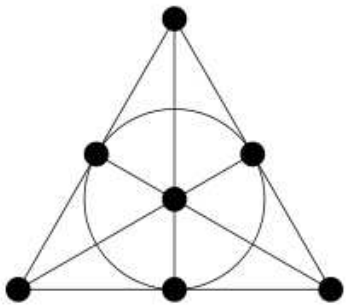,width=5.0cm}}}
\caption{The Fano plane}   
\end{figure}

As a first example, consider the Fano plane. The sets $X,B$ of points and lines form a $(7,3,1)$ block design, corresponding to the following $(1,2,2)$ pattern:
$$I_7=\begin{pmatrix}
x&x&y&y&y&x&y\\
y&x&x&y&y&y&x\\
x&y&x&x&y&y&y\\
y&x&y&x&x&y&y\\
y&y&x&y&x&x&y\\
y&y&y&x&y&x&x\\
x&y&y&y&x&y&x
\end{pmatrix}$$

Now remember that the Fano plane is the projective plane over $\mathbb F_2=\{0,1\}$. The same method works with $\mathbb F_2$ replaced by an arbitrary finite field $\mathbb F_q$, and we get:

\begin{proposition}
Assume that $q=p^k$ is a prime power. Then the point-line incidence matrix of the projective plane over $\mathbb F_q$ is a $(1,q,q^2-q)$ pattern.
\end{proposition}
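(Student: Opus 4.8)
The plan is to recognize the projective plane over $\mathbb F_q$ as a symmetric balanced incomplete block design with suitable parameters, and then to invoke Proposition 3.3. Concretely, I would take $X$ to be the set of points and $B$ the set of lines of the projective plane $PG(2,q)$, so that the point-line incidence matrix becomes exactly the adjacency matrix of this design, in the sense of Definition 3.2.

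First I would recall the standard combinatorial facts about $PG(2,q)$, which come from linear algebra over $\mathbb F_q$. Points are the $1$-dimensional subspaces and lines the $2$-dimensional subspaces of $\mathbb F_q^3$, so counting gives $|X| = |B| = (q^3-1)/(q-1) = q^2+q+1$. Each line, being a $2$-dimensional subspace, contains $(q^2-1)/(q-1) = q+1$ points; and any two distinct points, i.e. two distinct $1$-dimensional subspaces, span a unique $2$-dimensional subspace, hence lie on exactly one common line. Thus the projective plane is a $(q^2+q+1, q+1, 1)$ symmetric balanced incomplete block design.

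It then remains to match these parameters with the hypothesis of Proposition 3.3. Setting $a = 1$, $b = q$, $c = q^2 - q$, we have $N = a + 2b + c = q^2 + q + 1$, together with $a + b = q + 1$ and $a = 1$, so that the design is precisely an $(N, a+b, a)$ symmetric balanced incomplete block design. Proposition 3.3 then yields at once that its incidence matrix is an $(a,b,c) = (1, q, q^2 - q)$ pattern, as claimed.

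As for the difficulty, there is essentially no obstacle here beyond correctly recalling the elementary parameters of $PG(2,q)$; the substance of the argument was already carried out in Proposition 3.3, and the present statement is a direct specialization of it. The only point requiring a little care is the verification that every pair of distinct points lies on exactly one line, the condition $\lambda = 1$, which is where the projective rather than affine structure of the plane is genuinely used.
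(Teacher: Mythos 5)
Your proof is correct and follows exactly the route the paper takes: identify $PG(2,q)$ as a $(q^2+q+1,\,q+1,\,1)$ symmetric balanced incomplete block design and apply Proposition 3.3 with $a=1$, $b=q$, $c=q^2-q$. The extra detail you supply on counting points and lines via subspaces of $\mathbb F_q^3$ is a welcome elaboration of what the paper simply cites as known.
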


\begin{proof}
The sets $X,B$ of points and lines of the projective plane over $\mathbb F_q$ are known to form a $(q^2+q+1,q+1,1)$ block design, and this gives the result.
\end{proof}

There are many other interesting examples of symmetric balanced incomplete block designs, all giving rise to patterns, via Proposition 3.3. For instance the famous Paley biplane \cite{bro}, pictured below, is a $(11,5,2)$ block design, and hence gives rise to a $(2,3,3)$ pattern. When assigning certain special values to the parameters $x,y$ we obtain a $11\times 11$ almost Hadamard matrix, that we believe to be optimal. See section 4 below.

\begin{figure}[htbp]
\centerline{\hbox{\epsfig{figure=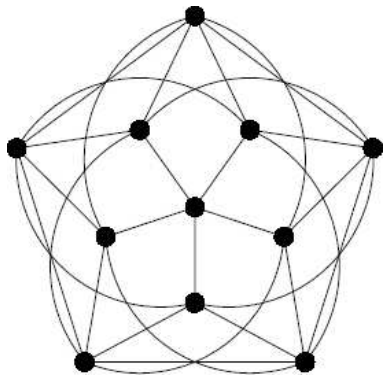,width=5.0cm}}}
\caption{The Paley biplane}   
\end{figure}

We consider now the problem of associating real values to the symbols $x,y$ in an $(a,b,c)$ pattern such that the resulting matrix $U(x,y)$ is orthogonal.

\begin{lemma}
Given $a,b,c\in\mathbb N$, there exists an orthogonal matrix having pattern $(a,b,c)$ iff $b^2\geq ac$. In this case the solutions are $U(x,y)$ and $-U(x,y)$, where
$$x=-\frac{t}{\sqrt{b}(t+1)},\quad\quad y=\frac{1}{\sqrt{b}(t+1)}$$
where $t=(b\pm\sqrt{b^2-ac})/a$ can be any of the solutions of $at^2-2bt+c=0$.
\end{lemma}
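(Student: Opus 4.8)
The plan is to reduce the orthogonality of $U(x,y)$ to two scalar equations and then solve them. Since $U(x,y)$ is a square real matrix, being orthogonal is equivalent to having orthonormal rows (the column relations $U^tU=I_N$ then follow automatically from $UU^t=I_N$). The whole point of the pattern condition is that it makes these row relations uniform: reading off the two-row picture displayed after Definition 3.1, every single row contains exactly $a+b$ entries equal to $x$ and $b+c$ entries equal to $y$, while any two distinct rows have their matching entries distributed as $a$ pairs $(x,x)$, $b$ pairs $(x,y)$, $b$ pairs $(y,x)$ and $c$ pairs $(y,y)$. Hence all $N$ normalization conditions collapse to the single equation $(a+b)x^2+(b+c)y^2=1$, and all $\binom{N}{2}$ orthogonality conditions collapse to $ax^2+2bxy+cy^2=0$.

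Next I would solve the orthogonality equation. One checks first that a nontrivial solution forces $y\neq0$: if $y=0$ then orthogonality gives $ax^2=0$ while normalization gives $(a+b)x^2=1$, forcing $a=0$, in which case $b^2\geq ac$ holds trivially; the case $x=0$ is symmetric. Assuming $y\neq0$, set $t=-x/y$ and divide $ax^2+2bxy+cy^2=0$ by $y^2$ to obtain $at^2-2bt+c=0$. This quadratic in $t$ has a real root if and only if its discriminant $4(b^2-ac)$ is nonnegative, i.e. if and only if $b^2\geq ac$, and in that case $t=(b\pm\sqrt{b^2-ac})/a$, which is exactly the claimed parameter, together with $x=-ty$. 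This settles the existence dichotomy in both directions.

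Finally I would recover the normalization. Substituting $x=-ty$ into $(a+b)x^2+(b+c)y^2=1$ gives $y^2\big[(a+b)t^2+(b+c)\big]=1$. The bracket simplifies using the orthogonality relation in the form $at^2+c=2bt$: indeed $(a+b)t^2+(b+c)=bt^2+(at^2+c)+b=bt^2+2bt+b=b(t+1)^2$. Here $t+1\neq0$, since $a(-1)^2-2b(-1)+c=a+2b+c=N>0$ shows that $-1$ is not a root of $at^2-2bt+c$. Therefore $y^2=1/\big(b(t+1)^2\big)$, and the two square roots give $y=\pm 1/(\sqrt{b}(t+1))$ with $x=-ty=\mp t/(\sqrt{b}(t+1))$, that is, precisely $U(x,y)$ and $-U(x,y)$.

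The main obstacle is conceptual rather than computational: it is the first step, where the combinatorial definition of an $(a,b,c)$ pattern must be used to guarantee that \emph{every} row has the same $x/y$ census and that \emph{every} pair of rows overlaps in the same way, so that orthonormality genuinely reduces to two equations in $x,y$. The only other point requiring care is the algebraic collapse $(a+b)t^2+(b+c)=b(t+1)^2$, which is what makes the normalization factor through $t+1$ and produces the clean closed form; one should also dispose of the degenerate parameter values (such as $a=0$, $b=0$, or a vanishing coordinate) separately, noting that they do not affect the criterion $b^2\geq ac$.
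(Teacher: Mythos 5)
Your proposal is correct and follows essentially the same route as the paper: reduce orthogonality to the two scalar equations $ax^2+2bxy+cy^2=0$ and $(a+b)x^2+(b+c)y^2=1$ via the pattern structure, substitute $t=-x/y$, and simplify the normalization bracket to $b(t+1)^2$ using $at^2+c=2bt$. Your extra care with the degenerate cases ($x=0$, $y=0$, $t=-1$) is a minor refinement of details the paper leaves implicit.
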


\begin{proof}
First, in order for $U$ to be orthogonal, the following conditions must be satisfied:
$$ax^2+2bxy+cy^2=0,\quad (a+b)x^2+(b+c)y^2=1$$

The first condition, coming from the orthogonality of rows, tells us that $t=-x/y$ must be the variable in the statement. As for the second condition, this becomes:
$$y^2=\frac{1}{(a+b)t^2+(b+c)}=\frac{1}{(at^2+c)+(bt^2+b)}=\frac{1}{2bt+bt^2+b}=\frac{1}{b(t+1)^2}$$

This gives the above formula of $y$, and hence the formula of $x=-ty$ as well.
\end{proof}

\begin{lemma}
Let $U=U(x,y)$ be orthogonal, corresponding to an $(a,b,c)$ pattern. Then $H=\sqrt{N}U$ is almost Hadamard iff $(N(a-b)+2b)|x|+(N(c-b)+2b)|y|\geq 0$.
\end{lemma}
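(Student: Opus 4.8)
The plan is to apply Definition 1.4 directly: since $U$ is orthogonal by hypothesis, $H=\sqrt{N}U$ is almost Hadamard precisely when $SU^t>0$, where $S_{ij}={\rm sgn}(U_{ij})$. The first step is to fix the signs of the entries. By Lemma 3.6 the relevant root satisfies $t\geq 0$, so $x=-t/(\sqrt{b}(t+1))\leq 0$ while $y=1/(\sqrt{b}(t+1))>0$; replacing $U(x,y)$ by $-U(x,y)$ if necessary, which changes both $S$ and $U$ by a global sign and hence leaves $SU^t$ unchanged, I may assume $x<0<y$. Thus $S$ is obtained from $U$ by sending every $x$ to $-1$ and every $y$ to $+1$, and the sign bookkeeping reduces to the four cases ${\rm sgn}(U_{ik})U_{jk}=+|x|,-|y|,-|x|,+|y|$ according as $(U_{ik},U_{jk})$ equals $(x,x),(x,y),(y,x),(y,y)$.

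Next I would compute the entries of $SU^t$ using the pattern. On the diagonal, $(SU^t)_{ii}=\sum_k{\rm sgn}(U_{ik})U_{ik}=\sum_k|U_{ik}|=(a+b)|x|+(b+c)|y|$, since each row contains $a+b$ copies of $x$ and $b+c$ copies of $y$. For $i\neq j$ I would invoke the four-block decomposition of the rows $i,j$ guaranteed by the $(a,b,c)$ pattern: the $a$ columns of type $(x,x)$ contribute $a|x|$, the $b$ columns of type $(x,y)$ contribute $-b|y|$, the $b$ columns of type $(y,x)$ contribute $-b|x|$, and the $c$ columns of type $(y,y)$ contribute $c|y|$, so that $(SU^t)_{ij}=(a-b)|x|+(c-b)|y|$. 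In particular $SU^t$ is symmetric, with constant diagonal $d=(a+b)|x|+(b+c)|y|$ and constant off-diagonal $e=(a-b)|x|+(c-b)|y|$; that is, $SU^t=(d-e)1_N+eNJ_N$, where $NJ_N$ is the all-ones matrix.

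Finally I would read off the spectrum. Since $NJ_N$ has eigenvalue $N$ on the all-ones vector and $0$ on its orthogonal complement, $SU^t$ has eigenvalue $d-e$ with multiplicity $N-1$ and eigenvalue $d+(N-1)e$ with multiplicity $1$. A short computation gives $d-e=2b(|x|+|y|)>0$, so the $(N-1)$-fold eigenvalue is positive for free, and $SU^t>0$ is equivalent to the single scalar inequality $d+(N-1)e>0$. Collecting the coefficients of $|x|$ and $|y|$ yields $d+(N-1)e=(N(a-b)+2b)|x|+(N(c-b)+2b)|y|$, which is exactly the claimed quantity.

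The computation is elementary, so the only genuine care lies in the organization: getting the off-diagonal sign accounting right and recognizing the constant-diagonal, constant-off-diagonal structure of $SU^t$, which is what collapses positive definiteness to a single inequality. I expect the main conceptual point, rather than an obstacle, to be the observation that the $(N-1)$-fold eigenvalue $d-e=2b(|x|+|y|)$ is automatically positive, so that only the all-ones eigenvalue $d+(N-1)e$ can obstruct positivity; the stated criterion is precisely the nonnegativity of this eigenvalue, with the boundary case $d+(N-1)e=0$, where $SU^t$ is merely positive semidefinite, accounting for the non-strict sign $\geq$.
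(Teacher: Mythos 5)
Your proof is correct and follows essentially the same route as the paper: both compute the constant diagonal and off-diagonal entries of $SU^t$ from the pattern and then reduce positivity to a single scalar inequality, the paper via the orthogonal projections $1_N-J_N$ and $J_N$ and you via the equivalent eigenvalue decomposition $d-e$ (multiplicity $N-1$) and $d+(N-1)e$ (multiplicity $1$). Your explicit normalization $x<0<y$ is a small point of added care that the paper leaves implicit in its sign bookkeeping.
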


\begin{proof}
We use the criterion in Definition 1.4 (2). So, let $S_{ij}={\rm sgn}(U_{ij})$. Since any row of $U$ consists of $a+b$ copies of $x$ and $b+c$ copies of $y$, we have:
$$(SU^t)_{ii}=\sum_k{\rm sgn}(U_{ik})U_{ik}=(a+b)|x|+(b+c)|y|$$

Regarding now $(SU^t)_{ij}$ with $i\neq j$, we can assume in the computation that the $i$-th and $j$-th row of $U$ are exactly those pictured after Definition 3.1 above. Thus:
\begin{eqnarray*}
(SU^t)_{ij}
&=&\sum_k{\rm sgn}(U_{ik})U_{jk}\\
&=&a\,{\rm sgn}(x)x+b\,{\rm sgn}(x)y+b\,{\rm sgn}(y)x+c\,{\rm sgn}(y)y\\
&=&a|x|-b|y|-b|x|+c|y|\\
&=&(a-b)|x|+(c-b)|y|
\end{eqnarray*}

We obtain the following formula for the matrix $SU^t$ itself:
\begin{eqnarray*}
SU^t
&=&2b(|x|+|y|)1_N+((a-b)|x|+(c-b)|y|)NJ_N\\
&=&2b(|x|+|y|)(1_N-J_N)+((N(a-b)+2b)|x|+(N(c-b)+2b)|y|))J_N
\end{eqnarray*}

Now since the matrices $1_N-J_N,J_N$ are orthogonal projections, we have $SU^t>0$ if and only if the coefficients of these matrices in the above expression are both positive. Since the coefficient of $1_N-J_N$ is clearly positive, the condition left is:
$$(N(a-b)+2b)|x|+(N(c-b)+2b)|y|\geq 0$$

So, we have obtained the condition in the statement, and we are done.
\end{proof}

\begin{proposition}
Assume that $a,b,c\in\mathbb N$ satisfy $c\geq a$ and $b(b-1)=ac$, and consider the $(a,b,c)$ pattern $U=U(x,y)$, where:
$$x=\frac{a+(1-a-b)\sqrt{b}}{Na},\quad y=\frac{b+(a+b)\sqrt{b}}{Nb}$$
Then $H=\sqrt{N}U$ is an almost Hadamard matrix. 
\end{proposition}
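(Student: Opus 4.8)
The plan is to reduce everything to the two lemmas already established for patterns: Lemma 3.6, which gives the explicit orthogonal matrix $U(x,y)$ attached to an $(a,b,c)$ pattern, and Lemma 3.7, which gives the almost Hadamard criterion. First I would record what the two hypotheses buy us. From $b(b-1)=ac$ we get $b^2-ac=b>0$, so $\sqrt{b^2-ac}=\sqrt b$, and Lemma 3.6 does produce an orthogonal $(a,b,c)$ pattern. Writing $c=b(b-1)/a$ also yields the single identity that will drive all the algebra, namely $Na=(a+2b+c)a=(a+b)^2-b$.

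Next I would check that the stated $x,y$ are exactly the Lemma 3.6 solution for the root $t=(b-\sqrt b)/a$ of $at^2-2bt+c=0$. Substituting this $t$ into $y=1/(\sqrt b\,(t+1))$ and rationalizing the denominator via $Na=(a+b)^2-b$ gives $y=(b+(a+b)\sqrt b)/(Nb)$, and then $x=-ty$ gives $x=(a+(1-a-b)\sqrt b)/(Na)$, matching the statement; this settles $U=U(x,y)\in O(N)$. For the signs, $y>0$ is clear, while $t=(b-\sqrt b)/a>0$ for $b\ge 2$ forces $x=-ty<0$ (the degenerate case $b\le 1$ makes $ac=0$ and is excluded since then $H$ has a zero entry). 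Hence $|x|=-x$ and $|y|=y$.

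Finally I would apply Lemma 3.7, whose criterion is $(N(a-b)+2b)|x|+(N(c-b)+2b)|y|\ge 0$. Multiplying through by the positive quantity $Nab$ turns the left side into
$$b(N(a-b)+2b)\big((a+b-1)\sqrt b-a\big)+a(N(c-b)+2b)\big(b+(a+b)\sqrt b\big),$$
which I would organize as a rational part plus a $\sqrt b$-part. Using $ac=b(b-1)$ and $Na=(a+b)^2-b$, the $N$-linear contribution to the coefficient of $\sqrt b$ is $-2abN$ and the $N$-free contribution is $+2abN$, so the entire $\sqrt b$-coefficient vanishes; this is exactly where $b(b-1)=ac$ is used. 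The rational (i.e. $\sqrt b$-free) part collapses to $ab(C-A)=abN(c-a)$. Thus the whole expression equals $abN(c-a)\ge 0$, the inequality holding precisely because $c\ge a$, and $H=\sqrt N\,U$ is almost Hadamard.

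I expect the only genuine work to be this last collapse. The delicate point is that the irrational part must cancel \emph{identically}, so rather than expanding blindly I would separate the $N$-linear and $N$-free terms of the $\sqrt b$-coefficient and verify they are opposite; once that cancellation is seen, the surviving rational term $abN(c-a)$ makes transparent that the two hypotheses $b(b-1)=ac$ and $c\ge a$ are doing exactly their respective jobs (killing the irrational part and securing positivity, with equality iff $c=a$).
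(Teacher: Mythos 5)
Your proposal is correct and follows essentially the same route as the paper: both proofs invoke the orthogonality lemma with the root $t=(b-\sqrt b)/a$, use the identity $Na=(a+b)^2-b$ to match the stated $x,y$, and then verify the criterion $(N(a-b)+2b)|x|+(N(c-b)+2b)|y|\geq 0$. The only difference is cosmetic: the paper factors $N(a-b)+2b=(a-c)(a+b)$ and $N(c-b)+2b=(c-a)(c+b)$ and shows the quantity equals $Ky$ with $K=\frac{c-a}{a}\bigl((a+b)\sqrt b-b\bigr)\geq 0$, whereas you clear denominators and split into rational and $\sqrt b$-parts, arriving at the equivalent conclusion $abN(c-a)\geq 0$.
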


\begin{proof}
We have $b^2-ac=b$, so Lemma 2.5 applies, and shows that with $t=(b-\sqrt{b})/a$ we have an orthogonal matrix $U=U(x,y)$, where:
$$x=-\frac{t}{\sqrt{b}(t+1)},\quad y=\frac{1}{\sqrt{b}(t+1)}$$

In order to compute these variables, we use the following formula:
$$(a+b)^2-b=a^2+b^2+2ab-b=a^2+2ab+ac=Na$$

This gives indeed the formula of $y$ in the statement:
$$y=\frac{a}{(a+b)\sqrt{b}-b}=\frac{(a+b)\sqrt{b}+b}{Nb}$$

As for the formula of $x$, we can obtain it as follows:
$$x=-ty=\frac{(\sqrt{b}-b)((a+b)\sqrt{b}+b)}{Nab}=\frac{a+(1-a-b)\sqrt{b}}{Na}$$

Let us compute now the quantity appearing in Lemma 3.6. We have:
\begin{eqnarray*}
N(a-b)+2b
&=&(a+2b+c)(a-b)+2b\\
&=&a^2+ab-2b^2+ac-bc+2b\\
&=&a^2+ab-ac-bc\\
&=&(a-c)(a+b)
\end{eqnarray*}

Similarly, $N(c-b)+2b=(c-a)(c+b)$, so the quantity in Lemma 3.6 is $Ky$, with:
\begin{eqnarray*}
K
&=&(a-c)(a+b)t+(c-a)(c+b)\\
&=&(c-a)(c+b-(a+b)t)\\
&=&\frac{c-a}{a}(ac+ab-(a+b)(b-\sqrt{b}))\\
&=&\frac{c-a}{a}((ac-b^2)+(a+b)\sqrt{b})\\
&=&\frac{c-a}{a}((a+b)\sqrt{b}-b)
\end{eqnarray*}

Since this quantity is positive, Lemma 3.6 applies and gives the result.
\end{proof}

\begin{theorem}
Assume that $q=p^k$ is a prime power. Then the matrix $I_N\in M_N(x,y)$, where $N=q^2+q+1$ and
$$x=\frac{1-q\sqrt{q}}{\sqrt{N}},\quad y=\frac{q+(q+1)\sqrt{q}}{q\sqrt{N}}$$
having $(1,q,q^2-q)$ pattern coming from the point-line incidence of the projective plane over $\mathbb F_q$ is an almost Hadamard matrix.
\end{theorem}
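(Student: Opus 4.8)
The plan is to recognize this statement as a direct specialization of Proposition 3.7. First I would record, via Proposition 3.4, that the point-line incidence matrix of the projective plane over $\mathbb{F}_q$ is an $(a,b,c)$ pattern with $(a,b,c)=(1,q,q^2-q)$, and that the associated size is indeed $N=a+2b+c=1+2q+(q^2-q)=q^2+q+1$, exactly as claimed.

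Next I would check that this triple satisfies the two hypotheses of Proposition 3.7. The inequality $c\geq a$ reads $q^2-q\geq 1$, which holds for every prime power $q=p^k\geq 2$. The relation $b(b-1)=ac$ reads $q(q-1)=1\cdot(q^2-q)$, which is an identity. Hence Proposition 3.7 applies verbatim, and produces an orthogonal matrix $U=U(x,y)$ for which $H=\sqrt{N}\,U$ is almost Hadamard.

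It then remains to match the entries. Proposition 3.7 gives the entries of the \emph{orthogonal} matrix $U$, namely $x_U=\frac{a+(1-a-b)\sqrt{b}}{Na}$ and $y_U=\frac{b+(a+b)\sqrt{b}}{Nb}$; substituting $a=1$, $b=q$ yields $x_U=\frac{1-q\sqrt{q}}{N}$ and $y_U=\frac{q+(q+1)\sqrt{q}}{Nq}$. Since the matrix $I_N$ in the statement is the almost Hadamard matrix $H=\sqrt{N}\,U$ rather than $U$ itself, its entries are $\sqrt{N}$ times those of $U$, giving exactly $x=\sqrt{N}\,x_U=\frac{1-q\sqrt{q}}{\sqrt{N}}$ and $y=\sqrt{N}\,y_U=\frac{q+(q+1)\sqrt{q}}{q\sqrt{N}}$, as required.

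There is essentially no hard obstacle here, since the entire analytic content has already been packaged into Proposition 3.7; the only point demanding care is the bookkeeping of the $\sqrt{N}$ rescaling between the orthogonal matrix $U$ of Proposition 3.7 and the almost Hadamard matrix $H=\sqrt{N}\,U=I_N$ appearing in the theorem, which is precisely where the denominators $N$ versus $\sqrt{N}$ get reconciled.
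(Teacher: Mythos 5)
Your proposal is correct and matches the paper's own proof, which likewise observes that $c\geq a$ and $b(b-1)=ac$ hold for $(a,b,c)=(1,q,q^2-q)$ and then invokes Proposition 3.7, noting that the variables produced there are $x/\sqrt{N}$ and $y/\sqrt{N}$. Your more explicit substitution and bookkeeping of the $\sqrt{N}$ rescaling is just a spelled-out version of the same argument.
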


\begin{proof}
Indeed, the conditions $c\geq a$ and $b(b-1)=ac$ needed in Proposition 3.7 are satisfied, and the variables constructed there are $x'=x/\sqrt{N}$ and $y'=y/\sqrt{N}$.
\end{proof}

There are of course many other interesting examples of two-entry almost Hadamard matrices, all worth investigating in detail, but we will stop here. Indeed, the main purpose of the reminder of this paper is to provide a list of almost Hadamard matrices which are ``as optimal as possible'', at $N=2,3,\ldots,13$, and in order to establish this list, we will just need the incidence matrices $I_N$, plus the matrix coming from the Paley biplane.

Let us mention however two more important aspects of the general theory:
\begin{enumerate}
\item The series $I_N$ is the particular case of a 2-parameter series $I_N^{(d)}$. Indeed, associated to $q=p^k$ and $d\in\mathbb N$ is a certain $([d+2]_q,[d+1]_q,[d]_q)$ block design coming from $\mathbb F_q$, where $[e]_q=(q^e-1)/(q-1)$, cf. \cite{cdi}, \cite{sti}. Thus by Proposition 3.7 we obtain an almost Hadamard matrix $I_N^{(d)}$, having pattern $(\frac{q^d-1}{q-1},q^d,q^d(q-1))$.

\item Trying to find block designs and patterns leads to the following chess problem: consider a $N\times N$ chessboard, take $NM$ rooks with $M\leq N/2$, fix an integer $K\leq M$, and try to place all the rooks on the board such that: (a) there are exactly $M$ rooks on each row and each column of the board, and (b) for any pair of rows or columns, there are exactly $K$ pairs of mutually attacking rooks.
\end{enumerate}

Finally, let us mention that there are many questions raised by the almost Hadamard matrices, at the quantum algebraic level. For instance the symmetries of Hadamard matrices are known to be described by quantum permutations \cite{ba2}, and it would be interesting to have a similar result for the almost Hadamard matrices. This might probably bring some new ideas on the ``homogeneous implies quantum homogeneous'' question, raised in \cite{ba1}, and having connections with the finite projective planes. Also, an interesting link between mutually unbiased bases, complex Hadamard matrices and affine planes was emphasized in \cite{ber}, but its relation with our present investigations is not known yet.

\section{List of examples}

In this section we present a list of examples of almost Hadamard matrices, for small values of $N$. Since we are mainly interested in the optimal case, our examples will be chosen to be ``as optimal as possible'', i.e. will be chosen as to have big $1$-norms.

So, let us first compute the 1-norms for the examples that we have. In what follows $L_N$ is the matrix found in Theorem 2.6, and $I_N$ is the matrix found in Theorem 3.8.

\begin{theorem}
The $1$-norms of the basic examples of almost Hadamard matrices are:
\begin{enumerate}
\item Hadamard case: $||H/\sqrt{N}||_1=N\sqrt{N}$, for any $H\in M_N(\pm 1)$ Hadamard.

\item Basic series case: $||K_N/\sqrt{N}||_1=3N-4$, where $K_N=\sqrt{N}(2J_N-1_N)$.

\item Circulant series case: $||L_N/\sqrt{N}||_1=\frac{2}{\pi}N\log N+O(N)$.

\item Incidence series case: $||I_N/\sqrt{N}||_1=(q^2-q-1)+2q(q+1)\sqrt{q}$.
\end{enumerate}
\end{theorem}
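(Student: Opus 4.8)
The plan is to treat the four parts separately. In each case $K_N$, $L_N$, $I_N$ plays the role of the almost Hadamard matrix $H=\sqrt N U$, so the task is always to evaluate $\|U\|_1=\sum_{ij}|U_{ij}|$ with $U=H/\sqrt N$. Parts (1) and (2) are immediate. For (1), $U=H/\sqrt N$ has all $N^2$ entries of modulus $1/\sqrt N$, so $\|U\|_1=N^2/\sqrt N=N\sqrt N$. For (2), by Proposition 1.5 the matrix $U=K_N/\sqrt N=2J_N-1_N$ has its $N$ diagonal entries equal to $2/N-1=(2-N)/N$ and its $N(N-1)$ off-diagonal entries equal to $2/N$, so summing absolute values gives $N\cdot\frac{N-2}{N}+N(N-1)\cdot\frac{2}{N}=(N-2)+2(N-1)=3N-4$.

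For (4) I would first read off from the $(1,q,q^2-q)$ pattern that each row of $I_N$ contains $a+b=q+1$ entries equal to $x$ and $b+c=q^2$ entries equal to $y$. Hence $U=I_N/\sqrt N$ has $N(q+1)$ entries of modulus $|x|/\sqrt N$ and $Nq^2$ entries of modulus $|y|/\sqrt N$. Since $q\ge 2$ forces $x<0<y$, I substitute $|x|=(q\sqrt q-1)/\sqrt N$ and $y=(q+(q+1)\sqrt q)/(q\sqrt N)$ from Theorem 3.8 to get
$$\|U\|_1=\Big[(q+1)(q\sqrt q-1)+q\big(q+(q+1)\sqrt q\big)\Big],$$
which expands and collapses to $(q^2-q-1)+2q(q+1)\sqrt q$ after collecting the $\sqrt q$-terms and the rational terms. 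This is a finite, routine algebraic simplification.

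The real work is (3). Using Theorem 2.6, the first row of $U=L_N/\sqrt N$ has entries $U_{0,k}=\frac{(-1)^k}{N}\sec\frac{k\pi}{N}$, so by circularity each value appears $N$ times and $\|U\|_1=\sum_{k=0}^{N-1}\big|\sec\frac{k\pi}{N}\big|$. Writing $N=2n+1$ and pairing $k$ with $N-k$ gives $\|U\|_1=1+2\sum_{k=1}^{n}\sec\frac{k\pi}{N}$, with every summand positive since $k\pi/N<\pi/2$. I would then set $\phi_{n-m}=\pi/2-(n-m)\pi/N=(2m+1)\pi/(2N)$, so that $\sec\frac{(n-m)\pi}{N}=\csc\phi_{n-m}$, and use the two-sided bound $\phi^{-1}\le\csc\phi\le\frac{\pi}{2}\phi^{-1}$ on $(0,\pi/2]$, sharpened to $\csc\phi=\phi^{-1}+O(\phi)$. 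This turns the sum into $\frac{2N}{\pi}\sum_{m=0}^{n-1}\frac{1}{2m+1}$ plus an error, where the correction terms contribute $O(N^{-1})\sum_{m=0}^{n-1}(2m+1)=O(n^2/N)=O(N)$. Since $\sum_{m=0}^{n-1}\frac{1}{2m+1}=\frac12\log n+O(1)$, the leading contribution is $2\cdot\frac{2N}{\pi}\cdot\frac12\log n=\frac{2N}{\pi}\log n=\frac{2}{\pi}N\log N+O(N)$, using $\log n=\log N+O(1)$.

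I expect the main obstacle to be the bookkeeping in (3): verifying that the deviation of $\csc$ from its pole $\phi^{-1}$, summed over all $m$, together with the additive constants hidden in the odd-harmonic asymptotics, genuinely stays inside the claimed $O(N)$ remainder. The extraction of the $\frac{2}{\pi}N\log N$ main term itself is transparent once the reduction to $\sum_m\frac{1}{2m+1}$ is in place; it is the uniform control of the error across the whole range $m=0,\dots,n-1$ that requires care.
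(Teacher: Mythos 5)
Your proof is correct and follows essentially the same route as the paper's: direct entry counts for (1), (2) and (4) (the paper merely routes (4) through the general formula $\|U\|_1=N((a+b)|x|+(b+c)|y|)$ before specializing to $a=1$, $b=q$, $c=q^2-q$), and for (3) the same reduction of $\sum_k|\sec(k\pi/N)|$ to $\csc$ of half-odd multiples of $\pi/(2N)$, approximation by $\frac{2N}{(2i+1)\pi}$ with an $O(n^2/N)=O(N)$ error, and the odd harmonic sum asymptotics. Your error analysis in (3) is in fact slightly more careful than the paper's one-line invocation of $\sin x\sim x$.
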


\begin{proof}
The first two assertions are clear. For the third one, with $N=2n+1$ we have:
$$||L_N/\sqrt{N}||_1=2\sum_{i=0}^n\cos^{-1}\frac{i\pi}{N}+O(N)=2\sum_{i=0}^n\sin^{-1}\frac{(2i+1)\pi}{2N}+O(N)$$

Now by using $\sin x\sim x$ and $\sum_{i=1}^k1/i=\log k+O(1)$ we obtain, as claimed:
$$||L_N/\sqrt{N}||_1=\frac{4N}{\pi}\sum_{i=0}^n\frac{1}{2i+1}+O(N)=\frac{2N}{\pi}\log N+O(N)$$

As for the last assertion, let first $U$ be the matrix in Lemma 3.6. We have:
$$||U||_1=N((a+b)|x|+(b+c)|y|)$$

In the particular case of the orthogonal matrices in Proposition 3.7, we get:
$$||U||_1=(c-a)+\frac{(a+b)(2a+2b-2)}{a}\sqrt{b}$$

Now with $a=1$, $b=q$, $c=q^2-q$, this gives the formula in the statement.
\end{proof}

Observe that at $N$ big the above matrices $K_N,L_N,I_N$ are far from being optimal. In fact, with $N\to\infty$, the corresponding orthogonal matrices don't even match the average of the 1-norm on $O(N)$, which, according to \cite{bc1}, is $\sim cN\sqrt{N}$, with $c=0.797..$

With these ingredients in hand, let us discuss now the various examples: 

At $N=2$ we have the Walsh matrix $H_2$, which is of course optimal.

At $N=3$ we have the almost Hadamard matrix $K_3$, which is optimal:
$$K_3=\frac{1}{\sqrt{3}}\begin{pmatrix}-1&2&2\\ 2&-1&2\\ 2&2&-1\end{pmatrix}$$ 

At $N=4$ we have the Hadamard matrix $H_4\sim K_4$, once again optimal.

At $N=5$ we have the matrix $K_5$, that we believe to be optimal as well:
$$K_5=\frac{1}{\sqrt{5}}\begin{pmatrix}-3&2&2&2&2\\ 2&-3&2&2&2\\ 2&2&-3&2&2\\ 2&2&2&-3&2\\ 2&2&2&2&-3\end{pmatrix}$$

At $N=6$ it is plausible that the optimal AHM is simply $K_3\otimes H_2$:
$$K_3\otimes H_2=\frac{1}{\sqrt{3}}\begin{pmatrix}-1&2&2&-1&2&2\\ 2&-1&2&2&-1&2\\ 2&2&-1&2&2&-1\\ -1&2&2&1&-2&-2\\ 2&-1&2&-2&1&-2\\ 2&2&-1&-2&-2&1\end{pmatrix}$$

At $N=7$ we have the incidence matrix of the Fano plane ($x=2-4\sqrt{2}$, $y=2+3\sqrt{2}$):
$$I_7=\frac{1}{2\sqrt{7}}\begin{pmatrix}
x&x&y&y&y&x&y\\
y&x&x&y&y&y&x\\
x&y&x&x&y&y&y\\
y&x&y&x&x&y&y\\
y&y&x&y&x&x&y\\
y&y&y&x&y&x&x\\
x&y&y&y&x&y&x
\end{pmatrix}$$

At $N=8$ we have the third Walsh matrix $H_8=H_2\otimes H_4$, of course optimal.

At $N=9$ we just have the matrix $K_3\otimes K_3$, which can be shown not to be optimal. 

At $N=10$ we believe that the matrix $K_5\otimes H_2$ is optimal:
$$K_5\otimes H_2=\frac{1}{\sqrt{5}}\begin{pmatrix}
-3&2&2&2&2&-3&2&2&2&2\\ 
2&-3&2&2&2&2&-3&2&2&2\\ 
2&2&-3&2&2&2&2&-3&2&2\\ 
2&2&2&-3&2&2&2&2&-3&2\\ 
2&2&2&2&-3&2&2&2&2&-3\\
-3&2&2&2&2&3&-2&-2&-2&-2\\ 
2&-3&2&2&2&-2&3&-2&-2&-2\\ 
2&2&-3&2&2&-2&-2&3&-2&-2\\ 
2&2&2&-3&2&-2&-2&-2&3&-2\\ 
2&2&2&2&-3&-2&-2&-2&-2&3
\end{pmatrix}$$

At $N=11$ we have the matrix of the Paley biplane ($x=6-12\sqrt{3}$, $y=6+10\sqrt{3}$):
$$P_{11}=\frac{1}{6\sqrt{11}}\left(\begin{array}{cccccccccccccc}
y& x& y& x& x& x& y& y& y& x& y\\
y& y& x& y& x& x& x& y& y& y& x\\
x& y& y& x& y& x& x& x& y& y& y\\
y& x& y& y& x& y& x& x& x& y& y\\
y& y& x& y& y& x& y& x& x& x& y\\
y& y& y& x& y& y& x& y& x& x& x\\
x& y& y& y& x& y& y& x& y& x& x\\
x& x& y& y& y& x& y& y& x& y& x\\
x& x& x& y& y& y& x& y& y& x& y\\
y& x& x& x& y& y& y& x& y& y& x\\
x& y& x& x& x& y& y& y& x& y& y
\end{array}\right)$$

At $N=12$ we have the Sylvester Hadamard matrix $S_{12}$, of course optimal.

At $N=13$ we have the incidence matrix of $P(\mathbb F_3)$ ($x=3-9\sqrt{3}$, $y=3+4\sqrt{3}$):
$$I_{13}=\frac{1}{3\sqrt{13}}\left(\begin{array}{cccccccccccccc}
x&x&x&x&y&y&y&y&y&y&y&y&y\\
x&y&y&y&x&x&x&y&y&y&y&y&y\\
x&y&y&y&y&y&y&x&x&x&y&y&y\\
x&y&y&y&y&y&y&y&y&y&x&x&x\\
y&x&y&y&y&y&x&y&x&y&y&y&x\\
y&x&y&y&y&x&y&y&y&x&x&y&y\\
y&x&y&y&x&y&y&x&y&y&y&x&y\\
y&y&x&y&y&y&x&x&y&y&x&y&y\\
y&y&x&y&x&y&y&y&y&x&y&y&x\\
y&y&x&y&y&x&y&y&x&y&y&x&y\\
y&y&y&x&y&x&y&x&y&y&y&y&x\\
y&y&y&x&y&y&x&y&y&x&y&x&y\\
y&y&y&x&x&y&y&y&x&y&x&y&y
\end{array}\right)$$

The norms of the above matrices can be computed by using the various formulae in Theorem 4.1 and its proof, and the results are summarized in Table 1.

\begin{table}
\begin{tabular}{ccccccc}
\hline 
$N$&matrix&1-norm (formula)&1-norm (numeric)&$N\sqrt{N}$&remarks
\tabularnewline\hline\hline
2&$H_2$&$2\sqrt{2}$&2.828&2.828&Hadamard
\tabularnewline\hline 
3&$K_3$&5&5.000&5.196&optimal 
\tabularnewline\hline 
4&$K_4$&8&8.000&8.000&Hadamard 
\tabularnewline\hline 
5&$K_5$&11&11.000&11.118& 
\tabularnewline\hline 
6&$K_3\otimes H_2$&$10\sqrt{2}$8&14.142&14.697& 
\tabularnewline\hline 
7&$I_7$&$1+12\sqrt{2}$&17.971&18.520
\tabularnewline\hline 
8&$H_8$&$16\sqrt{2}$&22.627&22.627&Hadamard 
\tabularnewline\hline 
9&?&--&$>26.513$&27.000&  
\tabularnewline\hline 
10&$K_5\otimes H_2$&$22\sqrt{2}$&31.113&31.623&
\tabularnewline\hline 
11&$P_{11}$&$1+20\sqrt{3}$&35.641&36.483&
\tabularnewline\hline 
12&$S_{12}$&$24\sqrt{3}$&41.569&41.569&Hadamard
\tabularnewline\hline 
13&$I_{13}$&$5+24\sqrt{3}$&46.569&46.872&
\tabularnewline\hline\hline
\end{tabular}
\bigskip

\caption{Almost Hadamard matrices $H\in M_N(\mathbb R)$, chosen as for the corresponding orthogonal matrices $U=H/\sqrt{N}$ to have big 1-norm. All matrices are believed to be optimal. The lower bound for the maximum of the 1-norm on $O(9)$, which is not optimal, was obtained by numerical simulation.}
\label{table1}
\end{table}

\section{Conclusion}

We have seen in this paper that the Hadamard matrices are quite nicely generalized by the almost Hadamard matrices (AHM), which exist at any given order $N\in\mathbb N$. Our study of these matrices, which was for the most of algebraic nature, turns to be related to several interesting combinatorial problems, notably to the Circulant Hadamard Conjecture. 

We believe that the AHM can be used as well in connection with several problems in quantum physics, in a way somehow similar to the way the complex Hadamard matrices (CHM) are used. Indeed, since the CHM exist as well at any given order $N\in\mathbb N$, these matrices proved to be useful in several branches of quantum physics. For instance in quantum optics they are sometimes called the ``Zeilinger matrices'', as they can be applied to design symmetric linear multiports, used to split the beam into $N$ parts of the same intensity and to analyze interference effect \cite{rz+}, \cite{jsz}. In the theory of quantum information one uses quantum Hadamard matrices to construct mutually unbiased bases (MUB) \cite{iva}, \cite{de+} to design teleportation and dense coding schemes. As shown in the seminal work of Werner \cite{wer} these two problems are in fact equivalent and also equivalent to construction of unitary depolarisers and maximally entangled bases \cite{wgc}.

Although quantum mechanics in a natural way relays on a complex Hilbert space it is often convenient to study a simplified problem and restrict attention to the subset of real quantum states only. Such an approach can be useful in theoretical investigations of quantum entanglement \cite{cfr} or also in experimental studies on engineering of quantum states, as creating of a real state by an orthogonal rotation usually requires less effort than construction of an arbitrary complex state. 

There exists therefore a natural motivation to ask similar problems concerning e.g. unbiased bases and teleportation schemes in the real setup. For instance, it is known that for any $N$ there exist $\leq N/2+1$ real MUB and for most dimensions their actual number is not larger than 3, cf. \cite{bs+}, while for any prime $N$ there exist $N+1$ complex MUB. Note that the real MUB and Hadamard matrices are closely related to several combinatorial problems \cite{hko}, \cite{lmo}, \cite{mrw}. In the case the maximal number of real MUB does not exist one can search for an optimal set of real bases which are approximately unbiased.

Let us now return to the construction of symmetric multiports which relay on the complex Hadamard matrices \cite{rz+}, \cite{jsz}. In the case of a $k$-qubit system there exist Hadamard matrices of order $N=2^k$, so one may use real orthogonal matrices for this purpose. However, already for $N=3,5,6,7$ real Hadamard matrices do not exist, so in these dimensions, if one restricts the rotations to orthogonal matrices, there are no real symmetric multiports. Therefore, for these dimensions one may always raise the following question: what is the optimal, approximate solution of the problem, if one is allowed to use only real states and orthogonal matrices? The almost Hadamard matrices analyzed in this paper are directly applicable for such a class of problems.

\end{document}